\newtheorem{thm}{Theorem}
\newtheorem{cor}[thm]{Corollary}
\newtheorem{defi}[thm]{Definition}
\newtheorem{rem}[thm]{Remark}
\newtheorem{nota}[thm]{Notation}
\newtheorem{exa}[thm]{Example}
\newtheorem{tempie}[thm]{Template}
\newtheorem{ack}[thm]{Acknowledgement}
\newtheorem*{tempo*}{Template}
\newcommand\be{\begin{equation}}
\newcommand\ee{\end{equation}} 
\def\bdefi{\begin{defi}\rm}
\def\edefi{\end{defi}}
\def\bnota{\begin{nota}\rm}
\def\enota{\end{nota}}
\def\FIVE{\Pi_{1}^{1}\text{-\textup{\textsf{CA}}}_{0}}
\def\SIXK{\Pi_{k}^{1}\text{-\textsf{\textup{CA}}}_{0}^{\omega}}
\def\ATR{\textup{\textsf{ATR}}}
\def\W{\textup{\textsf{W}}}
\def\Y{\textup{\textsf{Y}}}
\def\Z{\textup{\textsf{Z}}}
\def\c{\textup{\textsf{c}}}
\def\RCA{\textup{\textsf{RCA}}}
\def\({\textup{(}}
\def\){\textup{)}}
\def\WO{\textup{\textsf{WO}}}
\def\RCAo{\textup{\textsf{RCA}}_{0}^{\omega}}
\def\ACAo{\textup{\textsf{ACA}}_{0}^{\omega}}
\def\WKL{\textup{\textsf{WKL}}}
\def\WWKL{\textup{\textsf{WWKL}}}
\def\bye{\end{document}}
\def\N{{\mathbb  N}}
\def\Q{{\mathbb  Q}}
\def\R{{\mathbb  R}}
\def\L{\textsf{\textup{L}}}
\def\MUC{\textup{\textsf{MUC}}}
\def\di{\rightarrow}
\def\asa{\leftrightarrow}
\def\ACA{\textup{\textsf{ACA}}}
\def\QFAC{\textup{\textsf{QF-AC}}}
\def\HBU{\textup{\textsf{HBU}}}
\def\LIN{\textup{\textsf{LIN}}}
\def\WHBU{\textup{\textsf{WHBU}}}
\def\UATR{\textup{\textsf{UATR}}}
\def\eps{\varepsilon}
\def\X{\textup{\textsf{X}}}
\def\FF{\textup{\textsf{FF}}}
\def\ECF{\textup{\textsf{ECF}}}
\newcommand{\T}{\textsf{\textup{T}}}
\numberwithin{equation}{section}
\numberwithin{thm}{section}
\begin{document}
\title{Splittings and disjunctions in Reverse Mathematics}
\author{Sam Sanders}
\address{School of Mathematics, University of Leeds \& Dept.\ of Mathematics, TU Darmstadt}
\email{sasander@me.com}

\begin{abstract}
Reverse Mathematics (RM hereafter) is a program in the foundations of mathematics founded by Friedman and developed extensively by Simpson and others.  
The aim of RM is to find the minimal axioms needed to prove a theorem of ordinary, i.e.\ non-set-theoretic, mathematics.  As suggested by the title, 
this paper deals with two (relatively rare) RM-phenomena, namely \emph{splittings} and \emph{disjunctions}.  
As to splittings, there are some examples in RM of theorems $A, B, C$ such that $A\asa (B\wedge C)$, i.e.\ $A$ can be \emph{split} into two independent (fairly natural) parts $B$ and $C$.  
As to disjunctions, there are (very few) examples in RM of theorems $D, E, F$ such that $D\asa (E\vee F)$, i.e.\ $D$ can be written as the \emph{disjunction} of two independent (fairly natural) parts $E$ and $F$.  
By contrast, we show in this paper that there is a plethora of (natural) splittings and disjunctions in Kohlenbach's \emph{higher-order} RM.  
\end{abstract}


\maketitle
\thispagestyle{empty}

\vspace{-0.5cm}
\section{Introduction}\label{intro}
Reverse Mathematics (RM hereafter) is a program in the foundations of mathematics initiated around 1975 by Friedman (\cites{fried,fried2}) and developed extensively by Simpson (\cite{simpson2}) and others.  
We refer to \cite{stillebron} for a basic introduction to RM and to \cite{simpson2, simpson1} for an (updated) overview of RM.  We will assume basic familiarity with RM, the associated `Big Five' systems and the `RM zoo' (\cite{damirzoo}).  
We do introduce Kohlenbach's \emph{higher-order} RM in some detail Section \ref{HORM}.    

\smallskip

As discussed in e.g.\ \cite{dsliceke}*{\S6.4}, there are (some) theorems $A, B, C$ in the RM zoo such that $A\asa (B\wedge C)$, i.e.\ $A$ can be \emph{split} into two independent (fairly natural) parts $B$ and $C$ (over $\RCA_{0}$).  
As to the possibility of $A\asa (B\vee C)$, there is \cite{yukebox}*{Theorem~4.5} which states that a certain theorem about dynamical systems is equivalent to the \emph{disjunction} of weak K\"onig's lemma and induction for $\Sigma_{2}^{0}$-formulas; neither disjunct of course implies the other (over $\RCA_{0}$).
Similar results are in \cite{boulanger} for model theory, but these are more logical in nature.     

\smallskip

It is fair to say that there are only few \emph{natural} examples of splittings and disjunctions in RM, though such claims are invariably subjective in nature. 
Nonetheless, the aim of this paper is to establish a \emph{plethora} of splittings and disjunctions in {higher-order} RM.  In particular, we obtain splittings and disjunctions involving (higher-order) $\WWKL_{0}$, the Big Five, and $\Z_{2}$, among others.  We similarly treat the covering theorems \emph{Cousin's lemma} and \emph{Lindel\"of's lemma} studied in \cite{dagsamIII}.  
Our main results are in Section \ref{main}, while a summary may be found in Section \ref{konkelfoes}; our base theories are generally conservative over $\WKL_{0}$ (or are strictly weaker). 

\smallskip

It goes without saying that our results highlight a \emph{major} difference between second- and higher-order arithmetic, and the associated development of RM.  
We provide some musings on this and related foundational matters in Section \ref{opinion}.  
\section{Preliminaries}
\subsection{Higher-order Reverse Mathematics}\label{HORM}
We sketch Kohlenbach's \emph{higher-order Reverse Mathematics} as introduced in \cite{kohlenbach2}.  In contrast to `classical' RM, higher-order RM makes use of the much richer language of \emph{higher-order arithmetic}.  

\smallskip

As suggested by its name, {higher-order arithmetic} extends second-order arithmetic.  Indeed, while the latter is restricted to numbers and sets of numbers, higher-order arithmetic also has sets of sets of numbers, sets of sets of sets of numbers, et cetera.  
To formalise this idea, we introduce the collection of \emph{all finite types} $\mathbf{T}$, defined by the two clauses:
\begin{center}
(i) $0\in \mathbf{T}$   and   (ii)  If $\sigma, \tau\in \mathbf{T}$ then $( \sigma \di \tau) \in \mathbf{T}$,
\end{center}
where $0$ is the type of natural numbers, and $\sigma\di \tau$ is the type of mappings from objects of type $\sigma$ to objects of type $\tau$.
In this way, $1\equiv 0\di 0$ is the type of functions from numbers to numbers, and where  $n+1\equiv n\di 0$.  Viewing sets as given by characteristic functions, we note that $\Z_{2}$ only includes objects of type $0$ and $1$; we denote the associated language by $\L_{2}$.

\smallskip

The language $\L_{\omega}$ includes variables $x^{\rho}, y^{\rho}, z^{\rho},\dots$ of any finite type $\rho\in \mathbf{T}$.  Types may be omitted when they can be inferred from context.  
The constants of $\L_{\omega}$ includes the type $0$ objects $0, 1$ and $ <_{0}, +_{0}, \times_{0},=_{0}$  which are intended to have their usual meaning as operations on $\N$.
Equality at higher types is defined in terms of `$=_{0}$' as follows: for any objects $x^{\tau}, y^{\tau}$, we have
\be\label{aparth}
[x=_{\tau}y] \equiv (\forall z_{1}^{\tau_{1}}\dots z_{k}^{\tau_{k}})[xz_{1}\dots z_{k}=_{0}yz_{1}\dots z_{k}],
\ee
if the type $\tau$ is composed as $\tau\equiv(\tau_{1}\di \dots\di \tau_{k}\di 0)$.  
Furthermore, $\L_{\omega}$ also includes the \emph{recursor constant} $\mathbf{R}_{\sigma}$ for any $\sigma\in \mathbf{T}$, which allows for iteration on type $\sigma$-objects as in the special case \eqref{special}.  
Formulas and terms are defined as usual.  
\bdefi The base theory $\RCAo$ consists of the following axioms:
\begin{enumerate}
 \renewcommand{\theenumi}{\alph{enumi}}
\item  Basic axioms expressing that $0, 1, <_{0}, +_{0}, \times_{0}$ form an ordered semi-ring with equality $=_{0}$.
\item Basic axioms defining the well-known $\Pi$ and $\Sigma$ combinators (aka $K$ and $S$ in \cite{avi2}), which allow for the definition of \emph{$\lambda$-abstraction}. 
\item The defining axiom of the recursor constant $\mathbf{R}_{0}$: For $m^{0}$ and $f^{1}$: 
\be\label{special}
\mathbf{R}_{0}(f, m, 0):= m \textup{ and } \mathbf{R}_{0}(f, m, n+1):= f( \mathbf{R}_{0}(f, m, n)).
\ee
\item The \emph{axiom of extensionality}: for all $\rho, \tau\in \mathbf{T}$, we have:
\be\label{EXT}\tag{$\textsf{\textup{E}}_{\rho, \tau}$}  
(\forall  x^{\rho},y^{\rho}, \varphi^{\rho\di \tau}) \big[x=_{\rho} y \di \varphi(x)=_{\tau}\varphi(y)   \big].
\ee 
\item The induction axiom for quantifier-free\footnote{To be absolutely clear, variables (of any finite type) are allowed in quantifier-free formulas of the language $\L_{\omega}$: only quantifiers are banned.} formulas of $\L_{\omega}$.
\item $\QFAC^{1,0}$: The quantifier-free axiom of choice as in Definition \ref{QFAC}.
\end{enumerate}
\edefi
\bdefi\label{QFAC} The axiom $\QFAC$ consists of the following for all $\sigma, \tau \in \textbf{T}$:
\be\tag{$\QFAC^{\sigma,\tau}$}
(\forall x^{\sigma})(\exists y^{\tau})A(x, y)\di (\exists Y^{\sigma\di \tau})(\forall x^{\sigma})A(x, Y(x)),
\ee
for any quantifier-free formula $A$ in the language of $\L_{\omega}$.
\edefi
As discussed in \cite{kohlenbach2}*{\S2}, $\RCAo$ and $\RCA_{0}$ prove the same sentences `up to language' as the latter is set-based and the former function-based.  Recursion as in \eqref{special} is called \emph{primitive recursion}; the class of functionals obtained from $\mathbf{R}_{\rho}$ for all $\rho \in \mathbf{T}$ is called \emph{G\"odel's system $T$} of all (higher-order) primitive recursive functionals.  

\smallskip

We use the usual notations for natural, rational, and real numbers, and the associated functions, as introduced in \cite{kohlenbach2}*{p.\ 288-289}.  
\begin{defi}[Real numbers and related notions in $\RCAo$]\label{keepintireal}\rm~
\begin{enumerate}
 \renewcommand{\theenumi}{\alph{enumi}}
\item Natural numbers correspond to type zero objects, and we use `$n^{0}$' and `$n\in \N$' interchangeably.  Rational numbers are defined as signed quotients of natural numbers, and `$q\in \Q$' and `$<_{\Q}$' have their usual meaning.    
\item Real numbers are represented by fast-converging Cauchy sequences $q_{(\cdot)}:\N\di \Q$, i.e.\  such that $(\forall n^{0}, i^{0})(|q_{n}-q_{n+i})|<_{\Q} \frac{1}{2^{n}})$.  
We use the `hat function' from \cite{kohlenbach2}*{p.\ 289} to guarantee that any $f^{1}$ defines a real number.  
\item We write `$x\in \R$' to express that $x^{1}:=(q^{1}_{(\cdot)})$ represents a real as in the previous item and write $[x](k):=q_{k}$ for the $k$-th approximation of $x$.    
\item Two reals $x, y$ represented by $q_{(\cdot)}$ and $r_{(\cdot)}$ are \emph{equal}, denoted $x=_{\R}y$, if $(\forall n^{0})(|q_{n}-r_{n}|\leq \frac{1}{2^{n-1}})$. Inequality `$<_{\R}$' is defined similarly.  
We sometimes omit the subscript `$\R$' if it is clear from context.           
\item Functions $F:\R\di \R$ are represented by $\Phi^{1\di 1}$ mapping equal reals to equal reals, i.e. $(\forall x , y\in \R)(x=_{\R}y\di \Phi(x)=_{\R}\Phi(y))$.
\item The relation `$x\leq_{\tau}y$' is defined as in \eqref{aparth} but with `$\leq_{0}$' instead of `$=_{0}$'.  Binary sequences are denoted `$f^{1}, g^{1}\leq_{1}1$', but also `$f,g\in C$' or `$f, g\in 2^{\N}$'.  
\end{enumerate}
\end{defi}
We now discuss the issue of representations of real numbers.
\begin{rem}\label{forealsteve}\rm
Introductory analysis courses often provide an explicit construction of $\R$ (perhaps in an appendix), while in practice one generally makes use of the axiomatic properties of $\R$, and not the explicit construction.  
Now, there are a number of different\footnote{The `early' constructions due to Dedekind (see e.g.\ \cite{kindke}; using cuts) and Cantor (see e.g.\ \cite{cant}; using Cauchy sequences) were both originally published in 1872.} such constructions: Tao uses Cauchy sequences in his text \cite{taoana1} and discusses decimal expansions in the Appendix \cite{taoana1}*{\S B}.  Hewitt-Stromberg also use Cauchy sequences in \cite{hestrong}*{\S5} and discuss Dedekind cuts in the exercises (\cite{hestrong}*{p.~46}).  Rudin uses Dedekind cuts in \cite{rudin} and mentions that Cauchy sequences yield the same result. 
Clearly, Definition \ref{keepintireal} is based on Cauchy sequences, but Hirst has shown that over $\RCA_{0}$, individual real numbers can be converted between various representations (\cite{polahirst}).  Thus, the choice of representation in Definition \ref{keepintireal} does not really matter, even over $\RCA_{0}$.  
Moreover, the latter proves (\cite{simpson2}*{II.4.5}) that the real number system satisfies all the axioms of an Archimedian ordered field, i.e.\ we generally work with the latter axiomatic properties in RM, rather than with the representations (whatever they are).     
\end{rem}
Finally, we mention the $\ECF$-interpretation, as it will be needed below.
\begin{rem}\label{ECFrem}\rm
 The technical definition of the $\ECF$-interpretation may be found in \cite{troelstra1}*{p.\ 138, 2.6}.
Intuitively speaking, the $\ECF$-interpretation $[A]_{\ECF}$ of a formula $A\in \L_{\omega}$ is just $A$ with all variables 
of type two and higher replaced by countable representations of continuous functionals. 
The $\ECF$-interpretation connects $\RCAo$ and $\RCA_{0}$ (See \cite{kohlenbach2}*{Prop.\ 3.1}) in that if $\RCAo$ proves $A$, then $\RCA_{0}$ proves $[A]_{\ECF}$, again `up to language', as $\RCA_{0}$ is 
formulated using sets, and $[A]_{\ECF}$ is formulated using types, namely only using type zero and one objects.  Note that for $A\in \L_{2}$, we have that $[A]_{\ECF}$ is just $A$ by definition.   
\end{rem}
\subsection{Some axioms of higher-order arithmetic}\label{axies!}
We introduce some functionals which constitute the counterparts of $\Z_{2}$, and some of the Big Five systems, in higher-order RM.
We use the formulation of these functionals as in \cite{kohlenbach2}.  

\smallskip
\noindent
First of all, $\ACA_{0}$ is readily derived from the following `Turing jump' functional:
\be\label{muk}\tag{$\exists^{2}$}
(\exists \varphi^{2}\leq_{2}1)(\forall f^{1})\big[(\exists n)(f(n)=0) \asa \varphi(f)=0    \big]. 
\ee
and $\ACA_{0}^{\omega}\equiv\RCAo+(\exists^{2})$ proves the same sentences as $\ACA_{0}$ by \cite{hunterphd}*{Theorem~2.5}. 
This functional is \emph{discontinuous} at $f=_{1}11\dots$, and $(\exists^{2})$ is equivalent to the existence of $F:\R\di\R$ such that $F(x)=1$ if $x>_{\R}0$, and $0$ otherwise (\cite{kohlenbach2}*{\S3}).  

\smallskip
\noindent
Secondly, $\FIVE$ is readily derived from the following `Suslin functional':
\be\tag{$S^{2}$}
(\exists S^{2}\leq_{2}1)(\forall f^{1})\big[  (\exists g^{1})(\forall x^{0})(f(\overline{g}n)=0)\asa S(f)=0  \big], 
\ee
and $\FIVE^{\omega}\equiv \RCAo+(S^{2})$ proves the same $\Pi_{3}^{1}$-sentences as $\FIVE$ by \cite{yamayamaharehare}*{Theorem 2.2}.   
By definition, the Suslin functional $S^{2}$ can decide whether a $\Sigma_{1}^{1}$-formula (as in the left-hand side of $(S^{2})$) is true or false.   
Note that we allow formulas with (type one) \emph{function} parameters, but \textbf{not} with (higher type) \emph{functional} parameters.  The system $\SIXK$ is defined similarly via a functional $S_{k}^{2}$ deciding $\Sigma_{k}^{1}$-formulas.  

\smallskip
\noindent
Thirdly, full second-order arithmetic $\Z_{2}$ is readily derived from the sentence:
\be\tag{$\exists^{3}$}
(\exists E^{3}\leq_{3}1)(\forall Y^{2})\big[  (\exists f^{1})Y(f)=0\asa E(Y)=0  \big], 
\ee
and we define $\Z_{2}^{\Omega}\equiv \RCAo+(\exists^{3})$, a conservative extension of $\Z_{2}$ by \cite{hunterphd}*{Cor. 2.6}.   The (unique) functional from $(\exists^{3})$ is also called `$\exists^{3}$', and we will use a similar convention for other functionals.  

\smallskip
\noindent
Fourth, weak K\"onig's lemma\footnote{Note that we take `$\WKL$' to be the $\L_{2}$-sentence \emph{every infinite binary tree has a path} as in \cite{simpson2}, while the Big Five system $\WKL_{0}$ is $\RCA_{0}+\WKL$, and $\WKL_{0}^{\omega}$ is $\RCAo+\WKL$.} ($\WKL$ hereafter) easily follows from both the `intuitionistic' and `classical' \emph{fan functional}, 
which are defined as follows:  
\be\tag{$\MUC$}
(\exists \Omega^{3})(\forall Y^{2})(\forall f, g\in C)(\overline{f}\Omega(Y)=\overline{g}\Omega(Y)\di Y(f)=Y(g)),
\ee 
\be\tag{$\textsf{\textup{FF}}$}\label{FF}
(\exists \Phi^{3})(\forall Y^{2}\in \textsf{\textup{cont}})(\forall f, g\in C)(\overline{f}\Phi(Y)=\overline{g}\Phi(Y)\di Y(f)=Y(g)),
\ee
where `$Y^{2}\in \textsf{cont}$' means that $Y$ is continuous on Baire space $\N^{\N}$.  
Clearly, $\exists^{2}$, $S^{2}$, and $\exists^{3}$ are a kind of comprehension axiom.    
As it turns out, the \emph{comprehension for Cantor space} functional also yields a conservative extension of $\WKL_{0}$:
\be\tag{$\kappa_{0}^{3}$}
(\exists \kappa_{0}^{3})(\forall Y^{2})\big[ \kappa_{0}(Y)=0\asa (\exists f\in C)(Y(f)>0)  \big], 
\ee
as $\MUC$ implies $(\kappa_{0}^{3})$, and the former is conservative over $\WKL_{0}$ by \cite{kohlenbach2}*{Cor.\ 3.15}.
The subscript `0' in $(\kappa_{0}^{3})$ has no purpose other than distinguishing this axiom from the related axiom $(\kappa^{3})$ from \cite{dagsam}.

\smallskip

\noindent
Finally, recall that the Heine-Borel theorem (aka \emph{Cousin's lemma}) states the existence of a finite sub-cover for an open cover of a compact space. 
Now, a functional $\Psi:\R\di \R^{+}$ gives rise to the \emph{canonical} cover $\cup_{x\in I} I_{x}^{\Psi}$ for $I\equiv [0,1]$, where $I_{x}^{\Psi}$ is the open interval $(x-\Psi(x), x+\Psi(x))$.  
Hence, the uncountable cover $\cup_{x\in I} I_{x}^{\Psi}$ has a finite sub-cover by the Heine-Borel theorem; in symbols:
\be\tag{$\HBU$}
(\forall \Psi:\R\di \R^{+})(\exists \langle y_{1}, \dots, y_{k}\rangle){(\forall x\in I)}(\exists i\leq k)(x\in I_{y_{i}}^{\Psi}).
\ee
By the results in \cite{dagsamIII, dagsamV}, $\Z_{2}^{\Omega}$ proves $\HBU$, but $\Z_{2}^{\omega}\equiv \cup_{k}\SIXK$ cannot.  
The importance and naturalness of $\HBU$ is discussed in Section \ref{opinion}. 

\smallskip

Furthermore, since Cantor space (denoted $C$ or $2^{\N}$) is homeomorphic to a closed subset of $[0,1]$, the former inherits the same property.  
In particular, for any $G^{2}$, the corresponding `canonical cover' of $2^{\N}$ is $\cup_{f\in 2^{\N}}[\overline{f}G(f)]$ where $[\sigma^{0^{*}}]$ is the set of all binary extensions of $\sigma$.  By compactness, there is a finite sequence $\langle f_0 , \ldots , f_n\rangle$ such that the set of $\cup_{i\leq n}[\bar f_{i} F(f_i)]$ still covers $2^{\N}$.  By \cite{dagsamIII}*{Theorem 3.3}, $\HBU$ is equivalent to the same compactness property for $C$, as follows:
\be\tag{$\HBU_{\c}$}
(\forall G^{2})(\exists \langle f_{1}, \dots, f_{k} \rangle ){(\forall f^{1}\leq_{1}1)}(\exists i\leq k)(f\in [\overline{f_{i}}G(f_{i})]).
\ee
Note that $\MUC$ implies $\HBU_{\c}$, i.e.\ the latter has weak first-order strength, but is extremely hard to prove by the aforementioned results.  

\smallskip

Finally, we need a `trivially uniform' version of $\ATR_{0}$:
\be\tag{$\UATR$}
(\exists \Phi^{1\di 1})(\forall X^{1}, f^{1})\big[\WO(X)\di H_{f}(X, \Phi(X,f)) \big], 
\ee
where $\WO(X)$ expresses that $X$ is a countable well-ordering and $H_{\theta}(X, Y)$ expresses that $Y$ is the result from iterating $\theta$ along $X$ (See \cite{simpson2}*{V} for details), and where $H_{f}(X, Y)$ is just $H_{\theta}(X, Y)$ with $\theta(n, Z)$ defined as $(\exists m^{0})(f(n,m, \overline{Z}m)=0)$.

\section{Main results}\label{main}
Our motivation and starting point is the splitting $(\exists^{3})\asa [(\kappa_{0}^{3})\wedge(\exists^{2})]$ communicated to us by Kohlenbach\footnote{The proof amounts to the observation that $\N^\N$ is recursively homeomorphic to a $\Pi^0_2$-subset of Cantor space. Since this set is computable in $\exists^{2}$, any oracle call to $\exists^{3}$ can be rewritten to an equivalent oracle call to $\kappa_{0}^{3}$, in a uniform way.} (See \cite{dagsam}*{Rem.\ 6.13}). 
It is then a natural question if $(\kappa_{0}^{3})$ can be split further, as discussed in Section \ref{CoC}.  We obtain similar results for $\MUC$ in Section \ref{KUM}, which yields splittings and disjunctions for $(\exists^{2})$, $(\exists^{3})$, $(Z^{3})$, and $\FF$ in Section \ref{more}.
We similarly study $\HBU$ in Section \ref{disjunkie}, while other covering theorems, including the original \emph{Lindel\"of lemma}, are discussed in Section \ref{others}. 
As done in e.g.\ \cite{dsliceke}, we shall always write `$A+B$' in the stead of `$A\wedge B$'.
\subsection{Comprehension on Cantor space}\label{CoC}
We show that $(\kappa_{0}^{3})$ defined as follows:
\be\tag{$\kappa_{0}^{3}$}
(\exists \kappa_{0}^{3})(\forall Y^{2})\big[ \kappa_{0}(Y)=0\asa (\exists f\in C)(Y(f)>0)  \big], 
\ee
splits into the classical fan functional, given by $\FF$ as follows:
\be\tag{$\textsf{\textup{FF}}$}\label{FF2}
(\exists \Phi^{3})(\forall Y^{2}\in \textsf{\textup{cont}})(\forall f, g\in C)(\overline{f}\Phi(Y)=\overline{g}\Phi(Y)\di Y(f)=Y(g)),
\ee
and a functional which tests for continuity on $\N^{\N}$, as follows:
\be\tag{$Z^{3}$}
(\exists Z^{3})(\forall Y^{2})\big[Z(Y)=0\asa (\forall f^{1})(\exists N^{0})(\forall g^{1})(\overline{f}N=\overline{g}N\di Y(f)=Y(g))  \big].
\ee
We will tacitly use $(\exists^{2})\di \FF\di \WKL$, which holds over $\RCAo$ by \cite{kohlenbach4}*{Prop.\ 4.10}.  
\begin{thm}
The system $\WKL_{0}^{\omega}+\QFAC^{2,0}$ proves $(\kappa_{0}^{3})\asa \big[(Z^{3})+\FF\big]$.
\end{thm}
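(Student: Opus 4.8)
The plan is to prove the two implications of $(\kappa_{0}^{3})\asa[(Z^{3})+\FF]$ separately, using throughout the tacit $(\exists^{2})\di\FF\di\WKL$ and the elementary topological fact that $\N^{\N}$, being metrisable, is a $k$-space: a functional $Y^{2}$ is continuous on $\N^{\N}$ if and only if its restriction to every compact ``box'' $K_{b}:=\prod_{n}\{0,1,\dots,b(n)\}$ (for $b^{1}\in\N^{\N}$) is uniformly continuous. I will also use that by $\WKL$ (equivalently, the fan theorem) a functional continuous on $C=2^{\N}$ is uniformly continuous there.

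For the implication $[(Z^{3})+\FF]\di(\kappa_{0}^{3})$, which I expect to be clean, fix $Y^{2}$ and set $Y'(f):=Y(\lambda n.\,f(n)\bmod 2)$, so that $Y'$ factors through $C$ and is continuous on $\N^{\N}$ exactly when $Y$ is continuous on $C$. I then define $\kappa_{0}(Y)$ by cases on $Z(Y')$. If $Z(Y')\neq 0$, then $Y$ is discontinuous on $C$, hence non-constant there; since $Y$ takes values in $\N$, it must then assume some value $>0$ on $C$, so the output recording that $(\exists f\in C)(Y(f)>0)$ holds is correct. If $Z(Y')=0$, then $Y'\in\textsf{cont}$ and $\FF$ supplies a uniform modulus $N:=\Phi(Y')$ on $C$, which is also a modulus for $Y$ on $C$; as $Y(f)$ then depends only on $\overline{f}N$ for $f\in C$, the quantifier $(\exists f\in C)(Y(f)>0)$ reduces to the decidable finite search over $\sigma\in\{0,1\}^{N}$ for $Y(\sigma\ast 00\dots)>0$. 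This meets the specification of $(\kappa_{0}^{3})$ and uses neither $\exists^{2}$ nor unbounded search; the only point needing care is the (immediate) implication \emph{discontinuous $\Rightarrow$ non-constant $\Rightarrow$ assumes a positive value} for $\N$-valued $Y$.

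For the converse $(\kappa_{0}^{3})\di[(Z^{3})+\FF]$, I would first establish $(Z^{3})$ and then deduce $\FF$ from it. Granting $(Z^{3})$, the $\FF$-part runs as follows: for a continuous $Y$ and each $N$, the statement ``$N$ is a uniform modulus for $Y$ on $C$'' is equivalent to $\kappa_{0}(Y^{*}_{N})\neq 0$, where $Y^{*}_{N}$ is the primitive recursively defined functional on $C$ that, reading its argument as a pair $(f,g)\in C\times C$, outputs $1$ iff $\overline{f}N=\overline{g}N$ and $Y(f)\neq Y(g)$. Hence the matrix $A(Y,N):\equiv[Z(Y')\neq 0]\vee[\kappa_{0}(Y^{*}_{N})\neq 0]$ is quantifier-free, and since every $Y$ is either discontinuous on $C$ (first disjunct) or continuous and thus uniformly continuous by $\WKL$ (second disjunct for large $N$), we have $(\forall Y^{2})(\exists N)A(Y,N)$. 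Applying $\QFAC^{2,0}$ extracts $\Phi^{2}$ with $(\forall Y^{2})A(Y,\Phi(Y))$, and on the continuous $Y$ that $\FF$ concerns the first disjunct fails, forcing $\Phi(Y)$ to be a genuine modulus.

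The main obstacle is the remaining step $(\kappa_{0}^{3})\di(Z^{3})$. By the $k$-space fact, continuity of $Y$ on $\N^{\N}$ is $(\forall b)(\exists N)[\,N$ is a uniform modulus for $Y$ on $K_{b}\,]$, and for \emph{fixed} $b$ the bracketed predicate is $\kappa_{0}$-decidable: because the blocks of $K_{b}$ have \emph{known} finite lengths $b(n)+1$, there is a primitive recursive coding $C\leftrightarrow K_{b}$, so the hunt for a modulus-violating pair in $K_{b}$ is again a single $\kappa_{0}$-call. The difficulty is eliminating the \emph{unbounded} quantifier combination $(\forall b\in\N^{\N})(\exists N)$ with only $\kappa_{0}$ and $\QFAC^{2,0}$ available: $\kappa_{0}$ searches the \emph{compact} space $C$, whereas the family $\{K_{b}:b\in\N^{\N}\}$ admits no $C$-parametrised cofinal subfamily, so ``$(\exists b)[\,Y$ not uniformly continuous on $K_{b}]$'' does not reduce directly to a Cantor-space search. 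Overcoming this—presumably by using $\QFAC^{2,0}$ to produce a candidate modulus-of-continuity functional $b\mapsto N$ and then a single $\kappa_{0}$-search over a $\WKL$-compactified space of certificates to verify or refute it—is where the real work lies, and is the step I expect to be hardest.
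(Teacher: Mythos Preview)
Your reverse implication $[(Z^{3})+\FF]\di(\kappa_{0}^{3})$ is correct and in fact slightly cleaner than the paper's: the paper case-splits on $(\exists^{2})\vee\neg(\exists^{2})$ because its auxiliary functional $Y_{0}$ (``$Y$ on $C$, zero otherwise'') needs $\exists^{2}$ to test $f\in C$, whereas your mod-$2$ construction $Y'$ avoids this and lets the whole argument run uniformly.

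The forward implication, however, has a genuine gap exactly where you locate it. You are right that a \emph{direct} construction of $Z$ from $\kappa_{0}$ requires collapsing an unbounded $\forall b\in\N^{\N}$ quantifier to a Cantor-space search, and there is no reason to expect $\QFAC^{2,0}$ alone to rescue this: $\QFAC^{2,0}$ extracts a function $Y\mapsto N$ from $\forall Y\exists N$-statements, not a function $b\mapsto N$, and even if one had such a modulus map $b\mapsto N_{b}$ there is no compact space over which a single $\kappa_{0}$-call can verify it for all $b$. The paper sidesteps this entirely by the excluded-middle split $(\exists^{2})\vee\neg(\exists^{2})$. If $(\exists^{2})$ holds, then combined with $(\kappa_{0}^{3})$ one obtains $(\exists^{3})$ (this is Kohlenbach's splitting $(\exists^{3})\asa[(\kappa_{0}^{3})+(\exists^{2})]$, recalled at the start of Section~\ref{main}), and $(\exists^{3})$ yields both $(Z^{3})$ and $\FF$ outright. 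If $\neg(\exists^{2})$ holds, then \emph{every} $Y^{2}$ is continuous on $\N^{\N}$ by \cite{kohlenbach2}*{Prop.~3.7}, so the constant functional $Z\equiv 0$ witnesses $(Z^{3})$; and $\WKL$ makes every such $Y$ uniformly continuous on $C$, so $\kappa_{0}$ renders the modulus predicate quantifier-free and $\QFAC^{2,0}$ delivers $\FF$. The point is that the proof is deliberately non-uniform in $\kappa_{0}$: one proves only the \emph{existence} of $Z$, with different witnesses in the two cases, rather than exhibiting a term for $Z$ in $\kappa_{0}$ as your plan attempts.
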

\begin{proof}
For the forward implication, we work in $\WKL_{0}^{\omega}+\QFAC^{2,0}+(\kappa_{0}^{3})$. In case $(\exists^{2})$ holds, we also have $(\exists^{3})$, and the latter functional readily implies $(Z^{3})$ and $\FF$. 
In case of $\neg(\exists^{2})$, all functionals $Y^{2}$ are continuous on Baire space by \cite{kohlenbach2}*{Prop.\ 3.7}, and $Z_{0}=_{3}0$ is as required for $(Z^{3})$.
By $\WKL$ (and \cite{kohlenbach4}*{Prop.\ 4.10}), all functionals $Y^{2}$ are uniformly continuous on Cantor space, i.e.\
\[
(\forall Y^{2})(\exists N^{0})\underline{(\forall f^{1}, g^{1}\in C)(\overline{f}N=\overline{g}N\di Y(f)=Y(g))}, 
\]
and the underlined formula may be treated as quantifier-free by $(\kappa_{0}^{3})$.  Applying $\QFAC^{2, 0}$, we obtain $\FF$.  The law of excluded middle finishes this implication.  

\smallskip

For the reverse implication, we work in $\RCAo+(Z^{3})+\FF$.  In case of $\neg(\exists^{2})$, all functionals $Y^{2}$ are continuous on Baire space by \cite{kohlenbach2}*{Prop.\ 3.7}, and $\FF$ readily implies $(\kappa_{0}^{3})$ by noting that the latter
restricted to $Y^{2}$ uniformly continuous on $C$ is trivial.  
In case of $(\exists^{2})$, let $Y_{0}$ be $Y$ on $C$, and zero otherwise.  
Now define $\kappa_{0}$ as follows: in case $Z(Y_{0})=0$, $Y_{0}$ is continuous on Cantor space, and use $\FF$ to decide whether $(\exists f\in C)(Y(f)>0)$; in case $Z(Y_{0})\ne 0$, then $(\exists f\in C)(Y_{0}(f)>0)$, and $\kappa_{0}(Y):=0$.   
The law of excluded middle finishes this implication.  
\end{proof}
\begin{cor}\label{doors}
The system $\RCAo+\QFAC^{2,0}$ proves $[(\kappa_{0}^{3})+\WKL]\asa [(Z^{3})+\FF]$ and the system $\RCAo$ proves $(\exists^{3})\asa \big[(\exists^{2})+(Z^{3})\big]$.
\end{cor}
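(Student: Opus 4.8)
The plan is to obtain both equivalences from the preceding Theorem, together with Kohlenbach's splitting $(\exists^{3})\asa [(\kappa_{0}^{3})+(\exists^{2})]$ and the tacitly-used chain $(\exists^{2})\di \FF\di \WKL$; no genuinely new construction is required.

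For the first equivalence I would re-package the Theorem, noting that $\WKL_{0}^{\omega}+\QFAC^{2,0}$ is just $\RCAo+\WKL+\QFAC^{2,0}$. Working in $\RCAo+\QFAC^{2,0}$, the forward direction is immediate: assuming $(\kappa_{0}^{3})+\WKL$ places us in the base theory of the Theorem, which then delivers $(Z^{3})+\FF$. For the reverse direction, assume $(Z^{3})+\FF$; here $\FF\di \WKL$ supplies the missing $\WKL$, so the reverse half of the Theorem (which, as its proof shows, goes through already over $\RCAo+(Z^{3})+\FF$) yields $(\kappa_{0}^{3})$, and conjoining the $\WKL$ just recovered gives $(\kappa_{0}^{3})+\WKL$. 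Thus the only ingredient beyond the Theorem is $\FF\di \WKL$, used to reconstitute the $\WKL$-conjunct on the left.

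For the second equivalence the key point is that $(\exists^{2})$ makes the conjunct $\FF$ automatic. Indeed $(\exists^{2})\di \FF\di \WKL$, and $(\exists^{2})$ also proves $\QFAC^{2,0}$: for quantifier-free $A$ the map $\lambda n. A(Y,n)$ is a term in $Y$, so Feferman's $\mu$ (available from $\exists^{2}$) defines a choice functional $\Phi(Y)$ returning the least witness. Hence over $\RCAo+(\exists^{2})$ all hypotheses of the Theorem hold, and its conclusion $(\kappa_{0}^{3})\asa [(Z^{3})+\FF]$ collapses to $(\kappa_{0}^{3})\asa (Z^{3})$, since $\FF$ is now provable outright. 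Conjoining $(\exists^{2})$ to both sides gives $[(\exists^{2})+(\kappa_{0}^{3})]\asa [(\exists^{2})+(Z^{3})]$, and chaining with Kohlenbach's splitting $(\exists^{3})\asa [(\exists^{2})+(\kappa_{0}^{3})]$ yields $(\exists^{3})\asa [(\exists^{2})+(Z^{3})]$. (The forward half $(\exists^{3})\di (Z^{3})$ can alternatively be seen directly, as $\exists^{3}$ together with the $\exists^{2}$ it computes decides the continuity predicate of $(Z^{3})$, which is arithmetical in the parameter $Y$.)

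Since these are corollaries, the only step needing real care is checking that the side conditions under which the Theorem is stated are genuinely met in each application. For the second equivalence this comes down to the observation that $(\exists^{2})$ on its own secures both $\QFAC^{2,0}$ (via $\mu$) and $\WKL$ (via $\FF$); for the first it comes down to the fact, readable off the Theorem's proof, that its reverse half already succeeds over $\RCAo+(Z^{3})+\FF$ without a separate $\WKL$-hypothesis, with the needed $\WKL$ supplied by $\FF$.
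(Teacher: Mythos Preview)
Your proposal is correct and follows the route the paper clearly intends: both claims are immediate corollaries of the preceding Theorem together with $\FF\di\WKL$ and Kohlenbach's splitting $(\exists^{3})\asa[(\kappa_{0}^{3})+(\exists^{2})]$, and the paper accordingly states the corollary without proof.

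One small simplification for the second equivalence: you do not need the detour through $(\exists^{2})\di\QFAC^{2,0}$. If you look back at the Theorem's proof, the reverse implication in the case $(\exists^{2})$ constructs $\kappa_{0}$ directly from $Z$, $\Phi$ (from $\FF$), and $\exists^{2}$, with no appeal to $\QFAC^{2,0}$; that axiom is only used in the \emph{forward} implication under $\neg(\exists^{2})$, to extract $\FF$ from the uniform-continuity statement. So for $(\exists^{2})+(Z^{3})\di(\exists^{3})$ over $\RCAo$ you can simply recycle that case verbatim to get $(\kappa_{0}^{3})$, then invoke the splitting. Your argument that $\mu^{2}$ yields $\QFAC^{2,0}$ is not wrong (quantifier-free formulas have characteristic terms, so $Y\mapsto\mu(\lambda n.\,t_{A}(Y,n))$ is a legitimate type-$3$ term), but it is an extra moving part that the intended proof avoids. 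Also, your parenthetical remark that the continuity predicate in $(Z^{3})$ is ``arithmetical in the parameter $Y$'' is imprecise: it contains genuine type-$1$ quantifiers, which is exactly why $\exists^{3}$ (not merely $\exists^{2}$) is needed to decide it.
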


\subsection{The intuitionistic fan functional}\label{KUM}
A hallmark of intuitionistic mathematics is \emph{Brouwer's continuity theorem} which expresses that all functions on the unit interval are (uniformly) continuous (\cite{brouw}).  
In the same vein, the \emph{intuitionistic fan functional} $\Omega^{3}$ as in $\MUC$ provides a \emph{modulus} of uniform continuity on Cantor space:
\be\tag{$\MUC$}
(\exists \Omega^{3})(\forall Y^{2})(\forall f, g\in C)(\overline{f}\Omega(Y)=\overline{g}\Omega(Y)\di Y(f)=Y(g)).
\ee 
This axiom can be split nicely into classical and non-classical parts as follows.
\begin{thm}\label{dorkesss}
The system $\RCAo+\QFAC^{2,0}$ proves 
\[
\MUC\asa [(\kappa_{0}^{3})+\WKL+\neg(\exists^{2})]\asa  [(\kappa_{0}^{3})+\WKL+\neg(S^{2})] \asa [(\kappa_{0}^{3})+\WKL+\neg(\exists^{3})].
\]
\end{thm}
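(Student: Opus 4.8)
The plan is to establish the three biconditionals as a single cycle of implications,
\[
\MUC \di [(\kappa_{0}^{3})+\WKL+\neg(\exists^{2})] \di [(\kappa_{0}^{3})+\WKL+\neg(S^{2})] \di [(\kappa_{0}^{3})+\WKL+\neg(\exists^{3})] \di \MUC,
\]
since any cyclic chain of implications yields all the pairwise equivalences listed along the chain. Throughout I would work over $\RCAo+\QFAC^{2,0}$ and freely use the standard implications $(\exists^{3})\di (S^{2})\di (\exists^{2})$, whose contrapositives give $\neg(\exists^{2})\di \neg(S^{2})\di \neg(\exists^{3})$. The two middle implications are then immediate: each of the three conjunctions shares the conjuncts $(\kappa_{0}^{3})$ and $\WKL$, while the remaining negation conjunct only weakens as one passes from $\neg(\exists^{2})$ to $\neg(S^{2})$ to $\neg(\exists^{3})$, so these two steps require no further work.

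For the first implication $\MUC \di [(\kappa_{0}^{3})+\WKL+\neg(\exists^{2})]$, I would recall from Section~\ref{axies!} that $\MUC$ implies both $(\kappa_{0}^{3})$ and $\WKL$. It then remains to derive $\neg(\exists^{2})$. The point is that $\exists^{2}$, were it to exist, would be a functional $Y^{2}$ that is discontinuous on Cantor space at $f=_{1}11\dots$: the binary sequence equal to $1$ on its first $N$ entries and $0$ thereafter agrees with $11\dots$ on its first $N$ bits yet is mapped to $0$, whereas $\exists^{2}(11\dots)\ne 0$, and this happens for every $N$. This directly contradicts the uniform modulus of continuity on $C$ that $\MUC$ supplies for \emph{every} $Y^{2}$; hence $\neg(\exists^{2})$ holds.

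The crux is the closing implication $[(\kappa_{0}^{3})+\WKL+\neg(\exists^{3})] \di \MUC$, and its heart is a bootstrapping observation: over $\QFAC^{2,0}$ the hypotheses $(\kappa_{0}^{3})+\WKL$ upgrade the weak negation $\neg(\exists^{3})$ to the strong negation $\neg(\exists^{2})$. Indeed, assume $(\exists^{2})$ for contradiction; since we also have $(\kappa_{0}^{3})+\WKL+\QFAC^{2,0}$, the splitting of $(\kappa_{0}^{3})$ established in Section~\ref{CoC} yields $(Z^{3})+\FF$, and then $(\exists^{2})+(Z^{3}) \di (\exists^{3})$ by Corollary~\ref{doors}, contradicting $\neg(\exists^{3})$. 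Thus $\neg(\exists^{2})$ holds, so by \cite{kohlenbach2}*{Prop.\ 3.7} every $Y^{2}$ is continuous on Baire space, and via $\WKL$ (and \cite{kohlenbach4}*{Prop.\ 4.10}) uniformly continuous on $C$. Consequently $(\forall Y^{2})(\exists N^{0})(\forall f,g\in C)(\overline{f}N=\overline{g}N \di Y(f)=Y(g))$ holds; its matrix may be treated as quantifier-free using $(\kappa_{0}^{3})$, exactly as in the forward direction of the Section~\ref{CoC} theorem, so $\QFAC^{2,0}$ extracts a functional $\Omega^{3}$ witnessing $\MUC$. Equivalently, under $\neg(\exists^{2})$ the clause $Y^{2}\in\textsf{cont}$ is vacuous, so $\FF\asa\MUC$ and one may simply invoke $\FF$ from the Section~\ref{CoC} theorem. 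I expect this bootstrapping—trading the weak hypothesis $\neg(\exists^{3})$ for the strong conclusion $\neg(\exists^{2})$ by feeding $(\kappa_{0}^{3})+\WKL$ into the earlier splitting and Corollary~\ref{doors}—to be the main obstacle, as it is precisely what makes the weakest of the three negations already sufficient to recover $\MUC$.
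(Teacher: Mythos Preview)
Your proof is correct and follows essentially the same strategy as the paper: derive $(\kappa_{0}^{3})$, $\WKL$, and $\neg(\exists^{2})$ from $\MUC$; use the contrapositives $\neg(\exists^{2})\di\neg(S^{2})\di\neg(\exists^{3})$ for the easy directions; and for the hard direction bootstrap $\neg(\exists^{3})$ back to $\neg(\exists^{2})$ in the presence of $(\kappa_{0}^{3})$, then use continuity, $\WKL$, $(\kappa_{0}^{3})$, and $\QFAC^{2,0}$ to extract $\Omega$.

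The only difference worth noting is in how the bootstrap is carried out. The paper invokes the Kohlenbach splitting $(\exists^{3})\asa[(\kappa_{0}^{3})+(\exists^{2})]$ directly (this is the splitting announced at the start of Section~\ref{main}), which immediately gives $[(\kappa_{0}^{3})+\neg(\exists^{3})]\di\neg(\exists^{2})$ without needing $\WKL$ or $\QFAC^{2,0}$ at that step. You instead route through the Section~\ref{CoC} theorem to obtain $(Z^{3})$ and then apply Corollary~\ref{doors}; this is a bit more circuitous and, strictly speaking, consumes $\WKL+\QFAC^{2,0}$ where the paper does not, but since those hypotheses are already available it makes no real difference. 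Your cyclic organisation is otherwise a clean repackaging of the paper's argument.
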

\begin{proof}
For the first equivalence, assume $\MUC$ and note that the latter reduces the decision procedure for $(\exists f\in C)(Y(f)>0)$ to a finite search involving only $2^{\Omega(Y)}$ sequences.  
Furthermore, $(\exists^{2})$ clearly implies the existence of a discontinuous function on Cantor space, i.e.\ $\MUC\di \neg(\exists^{2})$ follows, while $\MUC\di \WKL$ follows from \cite{simpson2}*{IV.2.3}.  Now assume $(\kappa_{0}^{3})+\WKL+\neg(\exists^{2})$ and recall that by the latter all functionals $Y^{2}$ are continuous on Baire space by \cite{kohlenbach2}*{Prop.\ 3.7}.
By $\WKL$ (and \cite{kohlenbach4}*{Prop.\ 4.10}), all functionals $Y^{2}$ are uniformly continuous on Cantor space, i.e.\
\be\label{kortrerokseks}
(\forall Y^{2})(\exists N^{0})\underline{(\forall f^{1}, g^{1}\in C)(\overline{f}N=\overline{g}N\di Y(f)=Y(g))}, 
\ee
and the underlined formula may be treated as quantifier-free by $(\kappa_{0}^{3})$.  Applying $\QFAC^{2, 0}$, we obtain $\MUC$.  For the remaining equivalences, 
since $(\exists^{3})\asa [(\kappa^{3}_{0})+(\exists^{2})  ]$, we have that $[\neg(\exists^{3})+(\kappa_{0}^{3})]\di \neg(\exists^{2})$, and the same for $\neg(S^{2})$. 
Finally, note that $(\exists^{3})\di (S^{2})\di (\exists^{2})$ implies $\neg(\exists^{2})\di \neg(S^{2})\di\neg (\exists^{3})$.
\end{proof}
Recall the $\ECF$-interpretation introduced at the end of Section \ref{HORM}. 
By \cite{longmann}*{\S9.5}, we have $[\MUC]_{\ECF}\asa \WKL$ and $\WKL\di [(\kappa_{0}^{3})]_{\ECF}$, while $[(\exists^{2})]_{\ECF}\asa( 0=1)$ as $\exists^{2}$ is discontinuous (and therefore has no countable representation).  
Hence, $\neg(\exists^{2})$ \emph{cannot} be replaced by $\neg\ACA_{0}$ in the theorem, as $[A]_{\ECF}\asa A$ for $A\in \L_{2}$. 

\smallskip

Furthermore, the axiom $\MUC$ can \emph{also} be split as follows.  
As an exercise, the reader should show that the corollary also goes through for $\RCAo$.
\begin{cor}\label{conseuq}
The system $\RCAo+\QFAC^{2,0}$ proves 
\[
\MUC\asa [\FF+\neg(\exists^{2})]\asa [\FF+(Z^{3})+\neg(S^{2})]\asa  [\FF+(Z^{3})+\neg(\exists^{3})].
\]
\end{cor}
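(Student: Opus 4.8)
The plan is to obtain Corollary~\ref{conseuq} as an almost mechanical consequence of Theorem~\ref{dorkesss} and Corollary~\ref{doors}. The guiding observation is that Corollary~\ref{doors} licenses the replacement of the block $(\kappa_{0}^{3})+\WKL$ by $(Z^{3})+\FF$ inside each of the three middle terms of Theorem~\ref{dorkesss}. Indeed, Corollary~\ref{doors} establishes $[(\kappa_{0}^{3})+\WKL]\asa[(Z^{3})+\FF]$ over $\RCAo+\QFAC^{2,0}$, and conjoining a fixed sentence to both sides of an equivalence preserves it; since $\FF\di\WKL$ (part of the tacit fact $(\exists^{2})\di\FF\di\WKL$), the right-hand block $(Z^{3})+\FF$ already carries $\WKL$, so no hypothesis is lost when passing in either direction.

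For the first equivalence I would start from $\MUC\asa[(\kappa_{0}^{3})+\WKL+\neg(\exists^{2})]$ (Theorem~\ref{dorkesss}) and apply the substitution above to obtain $\MUC\asa[(Z^{3})+\FF+\neg(\exists^{2})]$. The remaining point is that $(Z^{3})$ is redundant in the presence of $\neg(\exists^{2})$: by \cite{kohlenbach2}*{Prop.\ 3.7} the latter forces every $Y^{2}$ to be continuous on Baire space, so the constant $Z_{0}=_{3}0$ witnesses $(Z^{3})$. Hence $[(Z^{3})+\FF+\neg(\exists^{2})]\asa[\FF+\neg(\exists^{2})]$, which yields $\MUC\asa[\FF+\neg(\exists^{2})]$, the first link of the displayed chain.

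For the second and third equivalences I would apply the same substitution to the corresponding links of Theorem~\ref{dorkesss}, namely $[(\kappa_{0}^{3})+\WKL+\neg(\exists^{2})]\asa[(\kappa_{0}^{3})+\WKL+\neg(S^{2})]$ and $[(\kappa_{0}^{3})+\WKL+\neg(S^{2})]\asa[(\kappa_{0}^{3})+\WKL+\neg(\exists^{3})]$. These become $[(Z^{3})+\FF+\neg(\exists^{2})]\asa[(Z^{3})+\FF+\neg(S^{2})]$ and $[(Z^{3})+\FF+\neg(S^{2})]\asa[(Z^{3})+\FF+\neg(\exists^{3})]$; simplifying the left side of the first of these by the $(Z^{3})$-redundancy under $\neg(\exists^{2})$ produces exactly the stated chain. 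The only point demanding care---and the nearest thing to an obstacle---is that $(Z^{3})$ must \emph{not} be dropped from the last two terms: under merely $\neg(S^{2})$ or $\neg(\exists^{3})$ the functional $(\exists^{2})$ may still be available, so discontinuous $Y^{2}$ can exist and $(Z^{3})$ is then a genuine extra hypothesis rather than a consequence. Everything else is the routine propagation of a conjunct through an equivalence, carried out throughout over $\RCAo+\QFAC^{2,0}$.
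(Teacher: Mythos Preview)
Your proposal is correct and follows essentially the same route as the paper: both combine Theorem~\ref{dorkesss} with Corollary~\ref{doors} to swap the block $(\kappa_{0}^{3})+\WKL$ for $(Z^{3})+\FF$, and both drop $(Z^{3})$ in the $\neg(\exists^{2})$ case via \cite{kohlenbach2}*{Prop.\ 3.7}. Your write-up is slightly more explicit about the mechanics of the substitution and about why $(Z^{3})$ must be retained in the last two terms, but the underlying argument is the same.
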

\begin{proof}
By Corollary \ref{doors} and the theorem, we have $\MUC\asa [ (Z^{3})+\FF + \neg(\exists^{2}) ]$, and we may omit $(Z^{3})$ because all functionals on $\N^{\N}$ are continuous given $\neg(\exists^{2})$.
By the same corollary, $ [(Z^{3})+\FF +\neg(S^{2})]\asa [(\kappa_{0}^{3})+\WKL+\neg(S^{2})]$, and the latter is equivalent to $\MUC$ by the theorem.  
The same reasoning applies to $\neg(\exists^{3})$.
\end{proof}
\noindent
%
%
%
As a result of the previous, the RM of $(\kappa_{0}^{3})$ is pretty robust.  Indeed, for a sentence $\W$ implying $(\kappa_{0}^{3})$, if the former implies the existence of a discontinuous functional, we obtain $(\exists^{3})$ by \cite{kohlenbach2}*{\S3}.  
What happens when $\W$ does \emph{not} imply this existence, is captured (in part) by the following theorem.
\begin{thm}
If $\MUC\di \W$ and $(\exists^{3})\di \W\di (\kappa_{0}^{3})$ over $\RCAo$, then $\WKL_{0}^{\omega}+\QFAC^{2,0}$ proves $\W\asa (\kappa_{0}^{3})$.
\end{thm}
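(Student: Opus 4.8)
The plan is to notice that the forward implication $\W\di(\kappa_{0}^{3})$ is supplied verbatim by the hypothesis $\W\di(\kappa_{0}^{3})$, so the entire content of the theorem is the reverse implication $(\kappa_{0}^{3})\di\W$. I would establish this by the same case distinction on $(\exists^{2})$ used in the proof of Theorem \ref{dorkesss}, the governing idea being that $(\kappa_{0}^{3})$ over $\WKL_{0}^{\omega}+\QFAC^{2,0}$ forces \emph{either} $\MUC$ \emph{or} $(\exists^{3})$, both of which imply $\W$ by hypothesis.

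First I would work in $\WKL_{0}^{\omega}+\QFAC^{2,0}+(\kappa_{0}^{3})$ and apply the law of excluded middle to $(\exists^{2})$. In the case that $(\exists^{2})$ holds, the starting splitting $(\exists^{3})\asa[(\kappa_{0}^{3})\wedge(\exists^{2})]$ communicated by Kohlenbach yields $(\exists^{3})$ outright, whence $\W$ follows from the hypothesis $(\exists^{3})\di\W$.

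In the case $\neg(\exists^{2})$, all functionals $Y^{2}$ are continuous on Baire space by \cite{kohlenbach2}*{Prop.\ 3.7}, and by $\WKL$ together with \cite{kohlenbach4}*{Prop.\ 4.10} they are uniformly continuous on Cantor space:
\[
(\forall Y^{2})(\exists N^{0})\underline{(\forall f^{1},g^{1}\in C)(\overline{f}N=\overline{g}N\di Y(f)=Y(g))}.
\]
Exactly as in Theorem \ref{dorkesss}, the underlined matrix may be treated as quantifier-free with the help of $(\kappa_{0}^{3})$, so applying $\QFAC^{2,0}$ extracts a modulus and delivers $\MUC$; then $\W$ follows from the hypothesis $\MUC\di\W$. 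The law of excluded middle combines the two cases to give $(\kappa_{0}^{3})\di\W$, completing the proof.

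The argument is essentially a repackaging of tools already in place, so I expect no genuine obstacle. The one subtlety worth flagging is that the $\neg(\exists^{2})$ branch must produce the \emph{full} $\MUC$, i.e.\ an actual modulus of uniform continuity, and not merely $\FF$; this is precisely why the hypothesis is phrased as $\MUC\di\W$ rather than $\FF\di\W$, and it is what makes the $\QFAC^{2,0}$ step the load-bearing part of the proof. The theorem is thus best read as a robustness or ``squeezing'' principle: any $\W$ sandwiched between $(\kappa_{0}^{3})$ below and $\MUC\vee(\exists^{3})$ above collapses to $(\kappa_{0}^{3})$ over $\WKL_{0}^{\omega}+\QFAC^{2,0}$.
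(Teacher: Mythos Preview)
Your proposal is correct and mirrors the paper's proof essentially line for line: both note that the forward implication is immediate, then for the reverse implication split on $(\exists^{2})\vee\neg(\exists^{2})$, obtaining $(\exists^{3})$ (hence $\W$) in the first case via the Kohlenbach splitting, and $\MUC$ (hence $\W$) in the second case by reproducing the $\WKL+(\kappa_{0}^{3})+\QFAC^{2,0}$ argument from Theorem~\ref{dorkesss}. Your closing remark about the ``squeezing'' reading and the need for full $\MUC$ rather than $\FF$ is a nice gloss but not part of the paper's own proof.
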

\begin{proof}
The forward implication is immediate.  For the reverse implication, consider $(\exists^{2})\vee \neg(\exists^{2})$; in the former case, we obtain $(\exists^{3})$ and hence $\W$, while in the latter case, we may use the proof of Theorem \ref{dorkesss}: the continuity of all functionals on Baire space and $\WKL$ imply \eqref{kortrerokseks}, which yields $\MUC$ thanks to $(\kappa_{0}^{3})$ and $\QFAC^{2,0}$, and $\W$ follows by assumption.  
\end{proof}

\subsection{More splittings and disjunctions}\label{more}
The results regarding the non-classical axiom $\MUC$ also yield splittings for the classical axioms $\FF$, $(\exists^{2})$, $(\exists^{3})$, and $(Z^{3})$. 
\begin{thm}\label{builurhouse}
The system $\RCAo+\QFAC^{2,0}$ proves 
\be\label{bayor}
[(\kappa_{0}^{3})+\WKL ]\asa [(\exists^{3})\vee \MUC]\asa [(\kappa_{0}^{3})+\FF ];
\ee
$\RCAo$ proves $\FF\asa [(\exists^{2})\vee \MUC]$, while $\WKL_{0}^{\omega}+\QFAC^{2,0}$ proves $\FF \asa [(\exists^{2})\vee (\kappa_{0}^{3})]$.  
\end{thm}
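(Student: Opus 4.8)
The plan is to prove all three equivalences by a single device: a case distinction on $(\exists^{2})\vee\neg(\exists^{2})$, after which I invoke the results already established in Sections \ref{CoC} and \ref{KUM}. The governing observation is that under $\neg(\exists^{2})$ every functional $Y^{2}$ is continuous on Baire space by \cite{kohlenbach2}*{Prop.\ 3.7}, so the continuity restriction in $\FF$ becomes vacuous and $\FF$ collapses into $\MUC$: the modulus $\Phi$ supplied by $\FF$ serves directly as the modulus $\Omega$ of $\MUC$, with no appeal to choice. In the complementary branch $(\exists^{2})$ holds, and together with $(\kappa_{0}^{3})$ it yields the strong disjunct via the splitting $(\exists^{3})\asa[(\kappa_{0}^{3})+(\exists^{2})]$. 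This is exactly how the disjunctions arise: one case produces the ``classical'' disjunct ($(\exists^{3})$ resp.\ $(\exists^{2})$), the other the ``non-classical'' one ($\MUC$ resp.\ $(\kappa_{0}^{3})$). Throughout I freely use the tacit implications $(\exists^{2})\di\FF\di\WKL$ and $(\exists^{3})\di(\exists^{2})$, together with $\MUC\di\FF$ (restrict the modulus to continuous functionals) and the fact that $\MUC\di(\kappa_{0}^{3})$.

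For the chain \eqref{bayor} I would treat the two equivalences separately. For $[(\kappa_{0}^{3})+\WKL]\asa[(\exists^{3})\vee\MUC]$ the backward direction is immediate, since each disjunct implies $(\kappa_{0}^{3})+\WKL$: the splitting gives $(\exists^{3})\di(\kappa_{0}^{3})$ and $(\exists^{3})\di(\exists^{2})\di\WKL$, while Theorem \ref{dorkesss} gives $\MUC\di[(\kappa_{0}^{3})+\WKL]$. For the forward direction, assume $(\kappa_{0}^{3})+\WKL$; if $(\exists^{2})$ then the splitting yields $(\exists^{3})$, and if $\neg(\exists^{2})$ then the argument of Theorem \ref{dorkesss}—continuity on Baire space plus $\WKL$ gives uniform continuity on $C$, which $(\kappa_{0}^{3})$ renders quantifier-free and $\QFAC^{2,0}$ converts into a modulus—produces $\MUC$. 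For $[(\exists^{3})\vee\MUC]\asa[(\kappa_{0}^{3})+\FF]$, the forward direction only needs that each disjunct implies both $(\kappa_{0}^{3})$ and $\FF$ (for $\MUC$ both are clear, for $(\exists^{3})$ use the splitting and $(\exists^{2})\di\FF$). For the backward direction, assume $(\kappa_{0}^{3})+\FF$ and split on $(\exists^{2})$: the $(\exists^{2})$-branch gives $(\exists^{3})$ as before, and the $\neg(\exists^{2})$-branch turns $\FF$ into $\MUC$ by the collapse above.

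The two remaining equivalences follow the same pattern with lighter machinery. For $\FF\asa[(\exists^{2})\vee\MUC]$ over $\RCAo$, the backward direction is just $(\exists^{2})\di\FF$ and $\MUC\di\FF$; the forward direction splits on $(\exists^{2})$, the only nontrivial branch being $\neg(\exists^{2})$, where the collapse of $\FF$ into $\MUC$ applies—crucially this needs no choice, which is why $\RCAo$ alone suffices. For $\FF\asa[(\exists^{2})\vee(\kappa_{0}^{3})]$ the extra base axioms $\WKL$ and $\QFAC^{2,0}$ enter only through Corollary \ref{doors}, which over $\WKL_{0}^{\omega}+\QFAC^{2,0}$ yields $(\kappa_{0}^{3})\di[(Z^{3})+\FF]\di\FF$; combined with $(\exists^{2})\di\FF$ this settles the backward direction. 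The forward direction again splits on $(\exists^{2})$: under $\neg(\exists^{2})$ the functional $\FF$ yields $\MUC$ and hence $(\kappa_{0}^{3})$ via $\MUC\di(\kappa_{0}^{3})$.

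I expect the only genuinely delicate point to be the forward direction of the first equivalence in \eqref{bayor}, namely the actual manufacture of the modulus functional in $\MUC$ from $(\kappa_{0}^{3})$, $\WKL$, and $\QFAC^{2,0}$ under $\neg(\exists^{2})$; everything else is bookkeeping with the implications listed above. Since that manufacture is precisely the content of Theorem \ref{dorkesss}, I can simply cite it, so in practice the whole proof reduces to a careful case analysis. One conceptual subtlety worth flagging, though not an obstacle to the proof, is that these really are disjunctions and not collapses: neither disjunct implies the other, since $\MUC$ is consistent with $\neg(\exists^{2})$ (indeed $\MUC\di\neg(\exists^{2})$ by Theorem \ref{dorkesss}) whereas $(\exists^{3})$ and $(\exists^{2})$ both refute it.
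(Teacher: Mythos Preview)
Your proposal is correct and follows essentially the same approach as the paper: both argue by the case distinction $(\exists^{2})\vee\neg(\exists^{2})$, invoking the splitting $(\exists^{3})\asa[(\kappa_{0}^{3})+(\exists^{2})]$ in the former case and the collapse of $\FF$ to $\MUC$ (via continuity of all type-two functionals, together with Theorem~\ref{dorkesss} where $\QFAC^{2,0}$ is needed) in the latter. The only cosmetic difference is that for the backward direction of the final equivalence you cite Corollary~\ref{doors} to obtain $(\kappa_{0}^{3})\di\FF$, whereas the paper re-runs the case split directly; both routes are valid and equally short.
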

\begin{proof}
For the first equivalence in \eqref{bayor}, the reverse implication is immediate if $(\exists^{3})$ holds, while it follows from Theorem \ref{dorkesss} if $\MUC$ holds.  For the forward implication, if $(\exists^{2})$, we have $(\exists^{3})$, while if $\neg(\exists^{2})$, we follow the proof of Theorem \ref{dorkesss} to obtain $\MUC$. 
The second equivalence in \eqref{bayor} follows in the same way. 
For the third equivalence, the reverse implication is immediate, while the forward implication follows by considering $(\exists^{2})\vee \neg(\exists^{2})$, noting that all functionals on $C$ are continuous in the latter case.  
For the final equivalence, we only need to prove $(\kappa_{0}^{3})\di \FF$ given $\WKL$.  The implication is immediate if $(\exists^{2})$, while it follows in the same way as in the proof of \eqref{bayor} in case $\neg(\exists^{2})$.  
\end{proof}
\begin{thm}\label{kwazoeloe}
The system $\RCAo+\QFAC^{2,0}$ proves $(\exists^{2})\asa [\FF+ \neg\MUC]$ and $(\exists^{3})\asa [\FF+(Z^{3})+\neg\MUC]$ and $(Z^{3})\asa [(\exists^{3}) \vee \neg(\exists^{2})]\asa [(\exists^{3}) \vee \neg\FF\vee \MUC ]$. 
\end{thm}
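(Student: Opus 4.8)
The plan is to treat the three stated equivalences one at a time, since each reduces to a short case analysis built on the equivalences already established in Theorem~\ref{builurhouse}, Corollary~\ref{doors}, and Corollary~\ref{conseuq}, together with the basic chain $(\exists^{3})\di (S^{2})\di (\exists^{2})\di \FF\di \WKL$ and the fact $\MUC\di \neg(\exists^{2})$ recorded in the proof of Theorem~\ref{dorkesss}. The recurring device throughout will be to split on the instance $(\exists^{2})\vee \neg(\exists^{2})$ of excluded middle, exploiting that under $\neg(\exists^{2})$ every $Y^{2}$ is continuous on Baire space by \cite{kohlenbach2}*{Prop.\ 3.7}.

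For the first claim $(\exists^{2})\asa [\FF+\neg\MUC]$, the forward direction is immediate: $(\exists^{2})$ yields $\FF$ by the basic chain, and it yields $\neg\MUC$ by contraposing $\MUC\di \neg(\exists^{2})$. For the reverse direction I would invoke the equivalence $\FF\asa [(\exists^{2})\vee \MUC]$ from Theorem~\ref{builurhouse} (which holds over $\RCAo$): from $\FF$ we obtain $(\exists^{2})\vee \MUC$, and the hypothesis $\neg\MUC$ discards the second disjunct, leaving $(\exists^{2})$.

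The second claim $(\exists^{3})\asa [\FF+(Z^{3})+\neg\MUC]$ then follows quickly. Forward, $(\exists^{3})$ delivers $\FF$ and $(Z^{3})$ directly, as already observed, and $\neg\MUC$ via $(\exists^{3})\di (\exists^{2})\di \neg\MUC$. Reverse, the first claim turns $\FF+\neg\MUC$ into $(\exists^{2})$, and then the equivalence $(\exists^{3})\asa [(\exists^{2})+(Z^{3})]$ of Corollary~\ref{doors} combines this with $(Z^{3})$ to give $(\exists^{3})$.

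The third claim is the one demanding the most care, as it chains two disjunctive equivalences. For $(Z^{3})\asa [(\exists^{3})\vee \neg(\exists^{2})]$: forward, split on $(\exists^{2})$, where the case $(\exists^{2})$ with $(Z^{3})$ yields $(\exists^{3})$ by Corollary~\ref{doors} and the case $\neg(\exists^{2})$ is already the right disjunct; reverse, if $(\exists^{3})$ then $(Z^{3})$ follows, while if $\neg(\exists^{2})$ then every $Y^{2}$ is continuous on Baire space and the constant functional $Z_{0}=_{3}0$ witnesses $(Z^{3})$. For $[(\exists^{3})\vee \neg(\exists^{2})]\asa [(\exists^{3})\vee \neg\FF\vee \MUC]$: the forward direction, in the nontrivial case $\neg(\exists^{2})$, splits once more on $\FF$, using $\MUC\asa [\FF+\neg(\exists^{2})]$ from Corollary~\ref{conseuq} to supply $\MUC$ when $\FF$ holds (and landing in the disjunct $\neg\FF$ otherwise); the reverse direction routes $\neg\FF\di \neg(\exists^{2})$ through the contrapositive of $(\exists^{2})\di \FF$, and $\MUC\di \neg(\exists^{2})$ disposes of the last disjunct. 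The main obstacle I anticipate is purely bookkeeping: keeping the nested excluded-middle branches of this last claim straight and checking that each branch lands in one of the named disjuncts; the only non-logical inputs are the use of \cite{kohlenbach2}*{Prop.\ 3.7} to witness $(Z^{3})$ by the zero functional under $\neg(\exists^{2})$, and the appeal to Corollary~\ref{conseuq}.
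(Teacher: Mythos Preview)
Your proposal is correct and follows essentially the same approach as the paper: both argue the first equivalence via the $\FF$/$\MUC$/$(\exists^{2})$ relationship (you cite Theorem~\ref{builurhouse}'s $\FF\asa [(\exists^{2})\vee \MUC]$, the paper cites the contrapositive of Corollary~\ref{conseuq}, which is the same fact), derive the second from the first via Corollary~\ref{doors}, and handle $(Z^{3})\asa [(\exists^{3})\vee \neg(\exists^{2})]$ by the same case split with $Z=_{3}0$ witnessing the $\neg(\exists^{2})$ branch. The only cosmetic difference is that for the last equivalence the paper simply substitutes the contrapositive $\neg(\exists^{2})\asa [\neg\FF\vee \MUC]$ of the first claim into the third, avoiding your nested excluded-middle split on $\FF$; your longer route is correct but the paper's shortcut is worth noting.
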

\begin{proof}
The second equivalence follows from the first one by Corollary \ref{doors}.
For the first equivalence, the forward implication is immediate, and for the reverse implication, Corollary \ref{conseuq} implies $\neg\MUC\asa [\neg \FF\vee (\exists^{2})]$.  
Since $\FF$ is assumed, we obtain $(\exists^{2})$.  For the third equivalence, the reverse implication is immediate in case $(\exists^{3})$, while $Z=_{3}0$ works if $\neg(\exists^{2})$ as all functionals on Baire space are continuous then; for the forward implication, consider $(\exists^{2})\vee \neg(\exists^{2})$ and use Corollary \ref{doors} in the former case.  The final equivalence now follows from the first equivalence. 
\end{proof}

\subsection{Heine-Borel compactness}\label{disjunkie}
We discuss the rich world of splittings and disjunctions associated to Heine-Borel compactness as in $\HBU$, which we recall:
\be\tag{$\HBU$}
(\forall \Psi:\R\di \R^{+})(\exists \langle y_{1}, \dots, y_{k}\rangle){(\forall x\in I)}(\exists i\leq k)(x\in I_{y_{i}}^{\Psi}).
\ee
Note that $\HBU_{\c}$ similarly expresses the open-cover compactness of Cantor space.   

\smallskip
\noindent
First of all, we establish a nice disjunction for $\WKL$.  
\begin{thm}\label{hingie}
The system $\RCAo$ proves that 
\be\label{corkukkk}
\WKL\asa[(\exists^{2})\vee \HBU_{\c}] \asa [\X\vee \HBU]\asa [\FF\vee\HBU_{\c}]. 
\ee
for any $\X\in \L_{2}$ such that $\ACA_{0}\di \X\di \WKL_{0}$.  
\end{thm}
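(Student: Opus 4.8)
The plan is to prove the core equivalence $\WKL\asa[(\exists^{2})\vee \HBU_{\c}]$ in full, and then obtain the two remaining equivalences cheaply using the equivalence $\HBU\asa\HBU_{\c}$ from \cite{dagsamIII}*{Theorem 3.3}, the implications $(\exists^{2})\di\FF\di\WKL$, and the hypotheses $\ACA_{0}\di\X\di\WKL_{0}$ on the second-order parameter $\X$. Throughout, the law of excluded middle applied to $(\exists^{2})$ is the organising device, exactly as in the proof of Theorem \ref{dorkesss}.

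For the forward direction of the core equivalence I would argue in $\RCAo+\WKL$. If $(\exists^{2})$ holds we are in the left disjunct. If $\neg(\exists^{2})$ holds, then every $Y^{2}$ is continuous on Baire space by \cite{kohlenbach2}*{Prop.\ 3.7}, and with $\WKL$ this upgrades to uniform continuity on Cantor space as in \eqref{kortrerokseks} via \cite{kohlenbach4}*{Prop.\ 4.10}. The key point is that a pointwise modulus $N$ of uniform continuity for a given $G^{2}$ collapses the canonical cover $\cup_{f\in C}[\overline{f}G(f)]$ to finitely many cylinders: since $G$ is constant, with some value $v_{\sigma}$, on each $[\sigma]$ with $|\sigma|=N$, one covers $[\sigma]$ by the single ball $[\sigma\restriction v_{\sigma}]$ when $v_{\sigma}<N$, and by the $2^{v_{\sigma}-N}$ balls $[\rho]$ (with $\sigma\preceq\rho$ and $|\rho|=v_{\sigma}$) otherwise; the union over $\sigma$ of length $N$ is an explicit finite subcover. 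Since $\HBU_{\c}$ only asserts the \emph{existence} of such a subcover for each $G$ — no modulus \emph{functional} is demanded — this is pointwise and requires no instance of choice, so we obtain $\HBU_{\c}$ over $\RCAo$ (unlike in Theorem \ref{dorkesss}, where $\QFAC^{2,0}$ was needed to assemble $\MUC$). This subcover extraction, where the higher-order content is converted into a first-order compactness witness, is the step I expect to be the main obstacle.

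For the reverse direction of the core equivalence it suffices to check $(\exists^{2})\di\WKL$ and $\HBU_{\c}\di\WKL$. The former is immediate from $(\exists^{2})\di\FF\di\WKL$. For the latter, given an infinite binary tree $T$, I would argue by contradiction: assuming $T$ has no path, the map $G(f):=$ the least $n$ with $\overline{f}n\notin T$ is total, and exists as a type-$2$ functional by $\QFAC^{1,0}$ (its defining matrix is quantifier-free, membership in $T$ being decidable). Each ball $[\overline{f}G(f)]$ of the canonical cover is then a cylinder $[\sigma]$ with $\sigma\notin T$, so a finite subcover from $\HBU_{\c}$ yields finitely many $\sigma_{i}\notin T$ with $\cup_{i}[\sigma_{i}]=C$. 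As $T$ is infinite it has a node $\tau$ longer than all the $\sigma_{i}$; then $\sigma_{i}\preceq\tau$ for some $i$, whence $\sigma_{i}\in T$ by downward closure, a contradiction. Hence $T$ has a path and $\WKL$ follows.

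Finally, I would close the chain. Using $\HBU\asa\HBU_{\c}$, the two remaining statements are $[\X\vee\HBU_{\c}]$ and $[\FF\vee\HBU_{\c}]$. For the forward passes, $(\exists^{2})$ yields $\ACA_{0}$ and hence $\X$ (as $\ACA_{0}\di\X$), and $(\exists^{2})\di\FF$; so $[(\exists^{2})\vee\HBU_{\c}]$ implies both. Conversely, $\X\di\WKL$ and $\FF\di\WKL$, together with $\HBU_{\c}\di\WKL$ from the previous paragraph, reduce each of $[\X\vee\HBU_{\c}]$ and $[\FF\vee\HBU_{\c}]$ to $\WKL$, which re-enters $[(\exists^{2})\vee\HBU_{\c}]$ by the already-established forward core equivalence. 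This yields all the equivalences in \eqref{corkukkk} over $\RCAo$.
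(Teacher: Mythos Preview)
Your proof is correct and follows essentially the same approach as the paper: split on $(\exists^{2})\vee\neg(\exists^{2})$, and in the $\neg(\exists^{2})$ case use continuity plus $\WKL$ to obtain the finite subcover. The paper's argument is slightly more streamlined in one spot: rather than analysing the uniform-continuity modulus case-by-case as you do, it simply notes that a uniformly continuous $G$ on $C$ is \emph{bounded}, say by $M$, whence the $2^{M}$ points $\sigma*00\ldots$ for $|\sigma|=M$ already give a subcover (since $G(f)\leq M$ forces $[\overline{f}G(f)]\supseteq[\overline{f}M]$). Your explicit proof of $\HBU_{\c}\di\WKL$ and your use of $\HBU\asa\HBU_{\c}$ to close the chain are details the paper leaves to citations and the phrase ``in the same way''; both are fine.
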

\begin{proof}
We prove the first equivalence and note that the other equivalences in \eqref{corkukkk} follow in the same way.  
The reverse implication follows from $\HBU_{\c}\di\WKL$ and $(\exists^{2})\di \ACA_{0}\di \WKL_{0}$.  
For the forward implication, note that all
functionals on $\N^{\N}$ are continuous given $\neg(\exists^{2})$, and hence uniformly continuous on $C$ by $\WKL$.  Hence, all functionals on $C$ have an upper bound, which immediately implies $\HBU_{\c}$.  
The law of excluded middle $(\exists^{2})\vee \neg(\exists^{2})$ finishes the proof.
\end{proof}
As noted in Section \ref{axies!}, the systems $\ACAo$ and $\FIVE^{\omega}$ are conservative extensions of their second-order counterparts.  
However, the $\ECF$-translation leaves $\L_{2}$-sentences unchanged, while translating $(\exists^{2})$ to `$0=1$'.  As a result, the disjuncts in the first equivalence in \eqref{dagcorkukkk} below are independent.  
\begin{cor}\label{hunsruck}
The system $\RCAo$ proves that 
\be\label{dagcorkukkk}
\WKL\asa[\FIVE \vee (\exists^{2})\vee \HBU_{\c}] \asa [\X\vee\FF \vee \HBU].  
\ee
for any $\X\in \L_{2}$ such that $\X\di \WKL_{0}$.  
\end{cor}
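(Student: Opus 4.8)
The plan is to reduce both equivalences in \eqref{dagcorkukkk} to Theorem \ref{hingie} by observing that each of the three formulas is provably equivalent to $\WKL$ over $\RCAo$; the displayed chain then follows by transitivity of $\asa$. The only real work is the bookkeeping that every newly adjoined disjunct ($\FIVE$ and $\X$) implies $\WKL$, so that it is harmless in the disjunction-elimination direction, while the forward direction never needs to \emph{prove} these extra disjuncts, since $\WKL$ already supplies a weaker sub-disjunction via Theorem \ref{hingie}.

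For the first equivalence $\WKL\asa[\FIVE\vee(\exists^{2})\vee\HBU_{\c}]$, I would treat the two directions separately. The implication from right to left is disjunction elimination: $\FIVE\di\WKL$ holds because $\Pi_{1}^{1}$-comprehension implies arithmetical comprehension and hence $\WKL$, while $(\exists^{2})\di\ACA_{0}\di\WKL_{0}$ and $\HBU_{\c}\di\WKL$ are exactly the facts already used in the proof of Theorem \ref{hingie}. The implication from left to right is immediate: Theorem \ref{hingie} gives $\WKL\di[(\exists^{2})\vee\HBU_{\c}]$, and adjoining $\FIVE$ as a further disjunct is trivial.

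For the second equivalence $[\FIVE\vee(\exists^{2})\vee\HBU_{\c}]\asa[\X\vee\FF\vee\HBU]$, since the left-hand side is already $\asa\WKL$ by the previous paragraph, it suffices to establish $[\X\vee\FF\vee\HBU]\asa\WKL$. In the elimination direction, $\X\di\WKL_{0}\di\WKL$ is the hypothesis on $\X$, $\FF\di\WKL$ is the implication from \cite{kohlenbach4}*{Prop.\ 4.10} used tacitly throughout, and $\HBU\di\WKL$ follows by combining $\HBU\asa\HBU_{\c}$ (\cite{dagsamIII}*{Theorem 3.3}) with $\HBU_{\c}\di\WKL$. For the converse, Theorem \ref{hingie} yields $\WKL\di[\FF\vee\HBU_{\c}]$, which the equivalence $\HBU_{\c}\asa\HBU$ turns into $\WKL\di[\FF\vee\HBU]$; adjoining $\X$ is again free.

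I do not anticipate a genuine obstacle, as the corollary is essentially a repackaging of Theorem \ref{hingie}. The one point deserving care is that the hypothesis on $\X$ has been weakened from the two-sided $\ACA_{0}\di\X\di\WKL_{0}$ of Theorem \ref{hingie} to the one-sided $\X\di\WKL_{0}$; this is admissible precisely because in the corollary $\X$ occurs only as a disjunct whose sole contribution is to imply $\WKL$, and it is never invoked to derive $\HBU$ or any companion disjunct. The genuine \emph{independence} of the disjuncts, which is what makes these disjunctions nontrivial, is not part of the statement to be proved; it is supplied by the $\ECF$-considerations in the preceding remark.
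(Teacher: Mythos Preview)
Your proposal is correct and follows the same route the paper has in mind: the corollary is stated without proof immediately after Theorem~\ref{hingie}, and your argument is exactly the intended reduction---add the extra disjuncts $\FIVE$ and $\X$ freely in the forward direction, and discharge them via $\FIVE\di\WKL$ and $\X\di\WKL$ in the reverse direction. Your observation that the one-sided hypothesis $\X\di\WKL_{0}$ suffices (because $\X$ is never used to produce another disjunct) is precisely the point of the corollary relative to Theorem~\ref{hingie}.
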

Secondly, let $\T_{1}$ be \cite{yukebox}*{Theorem~4.5.2} i.e.\ the $\L_{2}$-sentence: \emph{For all $k\in \N$ and all compact metric spaces $X$ and continuous functions $F:X\di X$,  $F^{k}$ is a continuous function from $X$ into $X$.}
Note that over $\RCA_{0}$, the statement $\T_{1}$ is equivalent to $\WKL\vee \Sigma_{2}^{0}\textsf{-IND}$,  where the latter is the induction schema restricted to $\Sigma_{2}^{0}$-formulas. 
\begin{cor}\label{thedamhasbroken}
The system $\RCAo$ proves 
\be\label{xxxxx}
\T_{1}\asa [ \HBU\vee \Sigma_{2}^{0}\textup{\textsf{-IND}}]\asa  [\WKL\vee\Sigma_{2}^{0}\textup{\textsf{-IND}}]\asa [\FF\vee \HBU\vee \Sigma_{2}^{0}\textup{\textsf{-IND}}].  
\ee
\end{cor}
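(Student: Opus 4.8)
The plan is to bootstrap off the stated equivalence $\T_{1}\asa[\WKL\vee\Sigma_{2}^{0}\textup{\textsf{-IND}}]$ (from \cite{yukebox}) together with the disjunctive decompositions of $\WKL$ supplied by Theorem \ref{hingie}. Since $\asa$ is transitive, the corollary reduces to showing that the middle term $[\WKL\vee\Sigma_{2}^{0}\textup{\textsf{-IND}}]$ is equivalent to each of the two outer terms; and the first displayed equivalence $\T_{1}\asa[\HBU\vee\Sigma_{2}^{0}\textup{\textsf{-IND}}]$ will then follow by composing the given equivalence with the middle one. So I would establish two things: (i) $[\HBU\vee\Sigma_{2}^{0}\textup{\textsf{-IND}}]\asa[\WKL\vee\Sigma_{2}^{0}\textup{\textsf{-IND}}]$, and (ii) $[\WKL\vee\Sigma_{2}^{0}\textup{\textsf{-IND}}]\asa[\FF\vee\HBU\vee\Sigma_{2}^{0}\textup{\textsf{-IND}}]$. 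Throughout, the disjunct $\Sigma_{2}^{0}\textup{\textsf{-IND}}$ is shared by all the displayed disjunctions, so whenever it holds every equivalence is trivially satisfied; the work lies in the cases where it fails, i.e.\ in converting between $\WKL$, $\HBU$, and $\FF\vee\HBU$.

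For (i), the reverse direction is immediate by upgrading: $\HBU\di\HBU_{\c}\di\WKL$ (using $\HBU\asa\HBU_{\c}$ from \cite{dagsamIII}*{Theorem 3.3} and $\HBU_{\c}\di\WKL$ as in the proof of Theorem \ref{hingie}), so the $\HBU$-disjunct implies the $\WKL$-disjunct while $\Sigma_{2}^{0}\textup{\textsf{-IND}}$ is common. For the forward direction I would assume $\WKL$ and invoke the first equivalence of Theorem \ref{hingie}, namely $\WKL\asa[(\exists^{2})\vee\HBU_{\c}]$, then split on this disjunction by the law of excluded middle. If $\HBU_{\c}$ holds then $\HBU$ holds, giving the left disjunct of the target. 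The conceptually crucial case is $(\exists^{2})$: here $\ACAo\equiv\RCAo+(\exists^{2})$ proves the same $\L_{2}$-sentences as $\ACA_{0}$, and since $\ACA_{0}$ proves arithmetical induction---hence every instance of $\Sigma_{2}^{0}\textup{\textsf{-IND}}$---we obtain $\Sigma_{2}^{0}\textup{\textsf{-IND}}$. Thus the $(\exists^{2})$-disjunct hidden inside $\WKL$ is \emph{absorbed} into $\Sigma_{2}^{0}\textup{\textsf{-IND}}$, which is precisely the mechanism that lets the three-way decomposition of $\WKL$ collapse to the desired two-way disjunction.

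For (ii), the reverse direction again uses upgrading: $\FF\di\WKL$ (by $(\exists^{2})\di\FF\di\WKL$) and $\HBU\di\WKL$ as above, so each of the extra disjuncts $\FF,\HBU$ implies the $\WKL$-disjunct. For the forward direction I would assume $\WKL$ and invoke the third equivalence of Theorem \ref{hingie}, $\WKL\asa[\FF\vee\HBU_{\c}]$: in the first case $\FF$ is directly a disjunct of the target, and in the second $\HBU_{\c}\di\HBU$ is. Finally, the first displayed equivalence $\T_{1}\asa[\HBU\vee\Sigma_{2}^{0}\textup{\textsf{-IND}}]$ follows by transitivity from the given $\T_{1}\asa[\WKL\vee\Sigma_{2}^{0}\textup{\textsf{-IND}}]$ and (i). The only step that is not a purely formal case analysis---and the heart of the matter---is the absorption $(\exists^{2})\di\Sigma_{2}^{0}\textup{\textsf{-IND}}$; once that is in hand, the remaining obstacle is merely bookkeeping the directions of the implications $\FF,\HBU\di\WKL$ and $\HBU_{\c}\asa\HBU$ so that each disjunct lands on the correct side.
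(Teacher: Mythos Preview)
Your argument is correct and follows essentially the same route as the paper: decompose $\WKL$ via Theorem~\ref{hingie} and absorb the comprehension-type disjunct into $\Sigma_{2}^{0}\textup{\textsf{-IND}}$ using $\ACA_{0}\di\Sigma_{2}^{0}\textup{\textsf{-IND}}$. Two cosmetic points: the paper uses the $\X=\ACA_{0}$ instance of \eqref{corkukkk} directly (so $\WKL\di[\ACA_{0}\vee\HBU]$) rather than going through $(\exists^{2})$ and conservativity, and your phrase ``split on this disjunction by the law of excluded middle'' is just disjunction elimination---LEM is not needed there.
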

\begin{proof}
We only need to prove the first equivalence. 
The reverse direction is immediate as $\HBU\di \WKL\di \T_{1}$ and $  \Sigma_{2}^{0}\textup{\textsf{-IND}}\di \T_{1}$. 
For the forward direction, 
\[
\T_{1}\di[ \WKL\vee \Sigma_{2}^{0}\textup{\textsf{-IND}}]\di [\ACA_{0}\vee \HBU\vee \Sigma_{2}^{0}\textup{\textsf{-IND}}], 
\]
since $\ACA_{0}$ implies $\Sigma_{k}^{0}\textup{\textsf{-IND}}$ (for any $k$), and we obtain the  equivalence in \eqref{xxxxx}.

\smallskip

We provide another proof of the forward direction that will be useful for Section~\ref{opinion}.  Assume $\T_{1}$ and consider $\Sigma_{2}^{0}\textup{\textsf{-IND}}\vee \neg[\Sigma_{2}^{0}\textup{\textsf{-IND}}]$.
In the erstwhile case, we are done.  In the latter case, we must have $\WKL$ due to $\T_{1}\asa [\WKL\vee \Sigma_{2}^{0}\textup{\textsf{-IND}}]$; since $(\exists^{2})\di \ACA_{0}\di \Sigma_{2}^{0}\textup{\textsf{-IND}}$, we also obtain $\neg(\exists^{2})$, and hence $\HBU$ as in the proof of the theorem, and we are done. 
\end{proof}
\noindent
Thirdly, while \eqref{corkukkk} and \eqref{xxxxx} may come across as \emph{spielerei}, $\WKL\asa [\ACA_{0}\vee \HBU]$ is actually of great conceptual importance, as follows.  
\begin{tempie}\label{kcuf}\rm
To prove a theorem $\T$ in $\WKL^{\omega}_{0}$, proceed as follows:  
\begin{enumerate}
 \renewcommand{\theenumi}{\alph{enumi}}
\item Prove $\T$ in $\ACA_{0}$ (or even using $\exists^{2}$), which is \emph{much}\footnote{For instance, the functional $\exists^{2}$ uniformly converts between binary-represented reals and reals-as-Cauchy-sequences.  In this way, one need not worry about representations and the associated extensionality like in Definition \ref{keepintireal}.(5).  By the proof of \cite{kohlenbach4}*{Prop.\ 4.7}, $\exists^{2}$ also uniformly converts a continuous function into an RM-code, i.e.\ we may `recycle' proofs in second-order arithmetic.} easier than in $\WKL_{0}$.\label{itema}
\item Prove $\T$ in $\RCAo+\HBU$ using the existing `uniform' proof from the literature based on Cousin's lemma (See e.g.\ \cite{bartle2,bartle3, gormon, thom2, josting, stillebron, botsko}).\label{itemb}
\item Conclude from \eqref{itema} and \eqref{itemb} that $\T$ can be proved in $\WKL^{\omega}_{0}$.
\end{enumerate}
\end{tempie}
Hence, even though the goal of RM is to find the \emph{minimal} axioms needed to prove a theorem, one can nonetheless achieve this goal by (only) using non-minimal axioms.  
We leave it to the reader to ponder how much time and effort could have been (and will be) saved using the previous three steps (for $\WKL$ or other axioms).    
As an exercise, the reader should try to prove \emph{Pincherle's theorem} (\cite{medvet}*{p.\ 97}) via Template \ref{kcuf}, using realisers for the antecedent as in the original \cite{tepelpinch}. 
The former theorem is studied in \cite{dagsamV}, where Template \ref{kcuf} is used frequently. 

\smallskip

Fourth, in \cite{boulanger}*{Theorem 2.28}, an equivalence between $\neg\WKL_{0}\vee \ACA_{0}$ and the following theorem is established: \emph{there is a complete theory with a non-principal type and only finitely many models up to isomorphism}.  The contraposition of the latter, which we shall denote   
$ \T_{0}$ and satisfies $\T_{0}\asa \WKL_{0}+\neg\ACA_{0}$, is described in \cite{boulanger} as \emph{a peculiar but natural statement} about some pre-ordering.  
\begin{cor}\label{puilo}
The system $\RCAo+ \T_{0}$ proves $\HBU$.  
The system $\RCAo$ proves $\T_{0}\asa [\WKL_{0}+ \neg \ACA_{0}] \asa [ \HBU+ \neg\ACA_{0}]$.  
\end{cor}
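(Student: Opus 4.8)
The plan is to take the Boulanger equivalence $\T_{0}\asa[\WKL_{0}+\neg\ACA_{0}]$ (cited from \cite{boulanger}*{Theorem 2.28} and stated just before the corollary) as a black box, and thereby reduce the entire corollary to the single new equivalence $[\WKL_{0}+\neg\ACA_{0}]\asa[\HBU+\neg\ACA_{0}]$ over $\RCAo$. Since the conjunct $\neg\ACA_{0}$ occurs on both sides and is merely carried along, this equivalence collapses to the two implications $\WKL+\neg\ACA_{0}\di\HBU$ and $\HBU\di\WKL$. The first displayed claim, $\RCAo+\T_{0}\di\HBU$, is then just the forward direction of the resulting chain, so no separate argument is needed for it.

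First I would record the purely logical step $\neg\ACA_{0}\di\neg(\exists^{2})$ over $\RCAo$. This is immediate from $(\exists^{2})\di\ACA_{0}$, which holds because $\ACA_{0}^{\omega}\equiv\RCAo+(\exists^{2})$ proves the same sentences as $\ACA_{0}$ by \cite{hunterphd}*{Theorem~2.5}. Hence, under the hypothesis $\neg\ACA_{0}$ we are placed in the $\neg(\exists^{2})$ regime, where the continuity machinery is available.

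The heart of the matter is then the implication $\WKL+\neg(\exists^{2})\di\HBU$, and this is precisely the computation already carried out in the proof of Theorem \ref{hingie}. Under $\neg(\exists^{2})$, all functionals $Y^{2}$ are continuous on Baire space by \cite{kohlenbach2}*{Prop.\ 3.7}; $\WKL$ then upgrades this to uniform continuity on Cantor space via \cite{kohlenbach4}*{Prop.\ 4.10}; a uniform modulus yields a uniform bound on $C$, which immediately gives $\HBU_{\c}$, and finally $\HBU_{\c}\asa\HBU$ by \cite{dagsamIII}*{Theorem 3.3}. For the converse I would simply invoke $\HBU\di\HBU_{\c}\di\WKL$. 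Assembling these, the forward direction of $[\WKL_{0}+\neg\ACA_{0}]\di[\HBU+\neg\ACA_{0}]$ runs $\WKL+\neg\ACA_{0}\di\neg(\exists^{2})\di\HBU$ while preserving the conjunct $\neg\ACA_{0}$, and the reverse direction runs $\HBU\di\WKL$ while again preserving $\neg\ACA_{0}$.

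I do not expect any genuine obstacle, since every ingredient has already been isolated in Theorem \ref{hingie} and the cited Boulanger equivalence; the corollary is essentially a repackaging. The one point deserving a moment's care is that it is the \emph{higher-order} hypothesis $\neg(\exists^{2})$, rather than the second-order $\neg\ACA_{0}$ itself, that triggers the continuity of all type-two functionals. One must therefore route through $(\exists^{2})\di\ACA_{0}$ to pass from $\neg\ACA_{0}$ to $\neg(\exists^{2})$, rather than attempting to use $\neg\ACA_{0}$ directly against the functionals $Y^{2}$.
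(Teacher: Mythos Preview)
Your proposal is correct and coincides essentially with the paper's second (alternative) proof: pass from $\neg\ACA_{0}$ to $\neg(\exists^{2})$ and then invoke the $\WKL+\neg(\exists^{2})\di\HBU$ step from the proof of Theorem~\ref{hingie}. The paper's first proof is a minor variant: rather than re-running the continuity argument, it uses the already-established equivalence $\WKL\asa[\ACA_{0}\vee\HBU]$ from \eqref{corkukkk} and substitutes it into $\neg\T_{0}\asa[\neg\WKL\vee\ACA_{0}]$, after which a purely propositional simplification yields $\neg\T_{0}\asa[\ACA_{0}\vee\neg\HBU]$; this packages the same content but avoids reopening the proof of Theorem~\ref{hingie}.
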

\begin{proof}
In $\neg\T_{0}\asa [\neg\WKL_{0}\vee \ACA_{0}]$, use \eqref{corkukkk} to replace $\WKL$ by $\ACA_{0}\vee \HBU$, i.e.\ 
\[
\neg\T_{0}\asa\big[ [\neg\ACA_{0}+ \neg \HBU]\vee \ACA_{0}\big]\asa \big[ \underline{[\neg\ACA_{0}\vee  \ACA_{0}]}+[ \ACA_{0}\vee \neg \HBU]\big].
\]
Omitting the underlined formula, the second (and first) part follows.  

\smallskip

We provide another proof of $\T_{0}\di [ \HBU+ \neg\ACA_{0}]$ that will be useful for Section~\ref{opinion}. 
Since $\T_{0}\di [\WKL_{0} +\neg\ACA_{0}]$, we also have $T_{0}\di [\WKL_{0}+\neg(\exists^{2})]$, and $\HBU$ follows as in the proof of the theorem. 
\end{proof}
By the previous, the negation of $\WKL_{0}$ or $\ACA_{0}$ implies axioms of Brouwer's intuitionistic mathematics, i.e.\ strange (as in `non-classical') behaviour is almost guaranteed.  The equivalence involving $\neg\WKL_{0}\vee  \ACA_{0}$ remains surprising.  By contrast, $\T_{0}$ seems fairly normal, relative to e.g.\ $\T_{1}$, by the following result.  
\begin{cor}
The system $\RCA_{0}$ proves $\T_{1}\asa (\T_{0}\vee \Sigma_{2}^{0}\textup{\textsf{-IND}})$, $\WKL_{0}\asa [\ACA_{0}\vee  \T_{0}]$, and $(\T_{0}\vee \T_{1})\asa \T_{1}$.
\end{cor}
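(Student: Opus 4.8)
The plan is to reduce all three equivalences to the two characterisations already recorded above, namely $\T_0\asa[\WKL_0+\neg\ACA_0]$ and $\T_1\asa[\WKL\vee\Sigma_{2}^{0}\textup{\textsf{-IND}}]$, together with the elementary implications $\ACA_0\di\Sigma_{2}^{0}\textup{\textsf{-IND}}$ and $\ACA_0\di\WKL_0$ (and $\WKL_0\di\WKL\di\T_1$), all available over $\RCA_{0}$. Since every statement in the corollary is an $\L_2$-sentence, the whole argument stays inside second-order arithmetic, and the only nontrivial device is the law of excluded middle applied to $\ACA_0$ (equivalently, to $\Sigma_{2}^{0}\textup{\textsf{-IND}}$).

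For the first equivalence $\T_1\asa(\T_0\vee\Sigma_{2}^{0}\textup{\textsf{-IND}})$, I would dispatch the reverse direction immediately: $\Sigma_{2}^{0}\textup{\textsf{-IND}}\di\T_1$ by the characterisation of $\T_1$, while $\T_0\di\WKL_0\di\WKL\di\T_1$. For the forward direction I would assume $\T_1$ and split on $\Sigma_{2}^{0}\textup{\textsf{-IND}}\vee\neg\Sigma_{2}^{0}\textup{\textsf{-IND}}$. In the first case the right disjunct holds and we are done. In the second case, $\T_1\asa[\WKL\vee\Sigma_{2}^{0}\textup{\textsf{-IND}}]$ forces $\WKL$, and contraposing $\ACA_0\di\Sigma_{2}^{0}\textup{\textsf{-IND}}$ yields $\neg\ACA_0$; hence $\WKL_0+\neg\ACA_0$, which is exactly $\T_0$.

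For $\WKL_0\asa[\ACA_0\vee\T_0]$, the reverse direction uses $\ACA_0\di\WKL_0$ and $\T_0\di\WKL_0$. The forward direction assumes $\WKL_0$ and splits on $\ACA_0\vee\neg\ACA_0$: the first case gives the left disjunct directly, and the second gives $\WKL_0+\neg\ACA_0$, i.e.\ $\T_0$. The final equivalence $(\T_0\vee\T_1)\asa\T_1$ is then essentially formal: the reverse direction is just the right disjunct, while the forward direction only needs $\T_0\di\T_1$, which again follows from $\T_0\di\WKL_0\di\WKL\di\T_1$.

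I do not anticipate a genuine obstacle: each equivalence is a Boolean rearrangement of the two characterisations combined with a single case split on $\ACA_0$. The one point requiring care is the bookkeeping in the forward direction of the first equivalence, where one must invoke the characterisation of $\T_1$ (to extract $\WKL$) and the contrapositive of $\ACA_0\di\Sigma_{2}^{0}\textup{\textsf{-IND}}$ (to extract $\neg\ACA_0$) \emph{simultaneously}, so as to land precisely on the conjunction $\WKL_0+\neg\ACA_0$ that defines $\T_0$, rather than on something strictly weaker.
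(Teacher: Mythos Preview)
Your proposal is correct and essentially matches the paper's proof. The only cosmetic difference is in the forward direction of the first equivalence: the paper writes it as a chain $\T_{1}\di [\WKL_{0}\vee \Sigma_{2}^{0}\textup{\textsf{-IND}}]\di [\ACA_{0}\vee \T_{0}\vee \Sigma_{2}^{0}\textup{\textsf{-IND}}]\di [\T_{0}\vee \Sigma_{2}^{0}\textup{\textsf{-IND}}]$ (substituting via the second equivalence and then absorbing $\ACA_{0}$ into $\Sigma_{2}^{0}\textup{\textsf{-IND}}$), whereas you do the logically equivalent case split on $\Sigma_{2}^{0}\textup{\textsf{-IND}}$ directly; the second and third equivalences are handled identically.
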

\begin{proof}
The second forward implication follows from $\ACA_{0} \vee \neg\ACA_{0}$, while the second reverse implication is immediate.
The first reverse implication is immediate, while the first forward implication follows from:
\[
\T_{1}\di [\WKL_{0} \vee \Sigma_{2}^{0}\textup{\textsf{-IND}}]\di [\ACA_{0}\vee  \T_{0} \vee \Sigma_{2}^{0}\textup{\textsf{-IND}}]\di [  \T_{0} \vee \Sigma_{2}^{0}\textup{\textsf{-IND}}],
\]
since $\ACA_{0}$ proves induction for any arithmetical formula.  The third equivalence follows by considering all cases in the disjunction that is $\T_{1}$.  
\end{proof}
Similar to Corollary \ref{hunsruck}, Theorem \ref{builurhouse} has the following corollary. 
Note that the $\ECF$-translation again implies the independence of the disjuncts in \eqref{dagbayor}, except that we do not know whether $\T_{0}\di \MUC$, over say $\RCAo$. 
\begin{cor}\label{dagbuilurhouse}
The system $\RCAo+\QFAC^{2,0}+\FF$ proves 
\be\label{dagbayor}
 [(\exists^{3})\vee \MUC] \asa (\kappa_{0}^{3}) \asa [(\exists^{3})\vee \MUC \vee \T_{0}].
\ee
\end{cor}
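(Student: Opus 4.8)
The plan is to read the result off from the first equivalence \eqref{bayor} of Theorem~\ref{builurhouse}, combined with the facts about $\T_{0}$ already isolated in Corollary~\ref{puilo}. Since we now work in $\RCAo+\QFAC^{2,0}+\FF$, the functional $\FF$ is always at our disposal, so the conjunct $[(\kappa_{0}^{3})+\FF]$ occurring in \eqref{bayor} collapses to $(\kappa_{0}^{3})$. Hence the first equivalence $[(\exists^{3})\vee \MUC] \asa (\kappa_{0}^{3})$ is immediate from the first equivalence of \eqref{bayor} once $\FF$ is assumed, and requires no further work.

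For the second equivalence $(\kappa_{0}^{3}) \asa [(\exists^{3})\vee \MUC \vee \T_{0}]$, the forward direction follows purely by weakening: the first equivalence already gives $(\kappa_{0}^{3})\di [(\exists^{3})\vee \MUC]$, and adjoining $\T_{0}$ as a third disjunct only weakens the conclusion. For the reverse direction I would argue by cases on the disjunction. If $(\exists^{3})$ or $\MUC$ holds, then $[(\exists^{3})\vee\MUC]$ holds, whence $(\kappa_{0}^{3})$ follows by the first equivalence just established. The only genuinely new case is $\T_{0}$.

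In the case $\T_{0}$, I first invoke Corollary~\ref{puilo}: from $\T_{0}\di [\WKL_{0}+\neg\ACA_{0}]$ together with $(\exists^{2})\di\ACA_{0}$ we obtain $\T_{0}\di \neg(\exists^{2})$, exactly as in the second proof given there. Under $\neg(\exists^{2})$ all functionals $Y^{2}$ are continuous on Baire space by \cite{kohlenbach2}*{Prop.\ 3.7}, hence uniformly continuous on Cantor space by $\WKL$ (available since $\FF\di\WKL$) via \cite{kohlenbach4}*{Prop.\ 4.10}. Then $\FF$ furnishes a modulus $\Phi(Y)$ of uniform continuity on $C$ for every $Y^{2}$, so that deciding $(\exists f\in C)(Y(f)>0)$ reduces to inspecting the finitely many stems of length $\Phi(Y)$; this defines the required $\kappa_{0}$ and yields $(\kappa_{0}^{3})$ directly, precisely as in the reverse implication of the opening theorem of Section~\ref{CoC}. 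Applying the law of excluded middle to the disjunction $(\exists^{3})\vee \MUC \vee \T_{0}$ closes the argument.

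The one point requiring care is circularity in the $\T_{0}$ case: one must \emph{not} route through the $\QFAC^{2,0}$-argument of Theorem~\ref{dorkesss}, which treats the uniform-continuity matrix as quantifier-free by appeal to $(\kappa_{0}^{3})$ itself — the very statement we are trying to derive. Instead, under $\neg(\exists^{2})$ the modulus supplied by $\FF$ turns the test $(\exists f\in C)(Y(f)>0)$ into a bona fide finite search, so $(\kappa_{0}^{3})$ is obtained without presupposing it (and, a fortiori, without any recourse to whether $\T_{0}\di\MUC$ over $\RCAo$, which is left open in the remark preceding the corollary). This is the main, if modest, obstacle; the remainder is bookkeeping with the already-established equivalences.
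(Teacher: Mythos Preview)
Your proof is correct and follows essentially the same route as the paper: the first equivalence is read off from \eqref{bayor} using that $\FF$ is in the base theory, and for the only nontrivial case $\T_{0}$ in the second reverse direction one uses $\T_{0}\di\neg\ACA_{0}\di\neg(\exists^{2})$ to conclude that every $Y^{2}$ is continuous, whereupon $\FF$ directly yields $(\kappa_{0}^{3})$ via the finite search. Your explicit caution about not routing through Theorem~\ref{dorkesss} (which would presuppose $(\kappa_{0}^{3})$) is well taken, though the paper simply sidesteps this by going straight from $\FF$ to $(\kappa_{0}^{3})$ as you do.
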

\begin{proof}
The first equivalence and the second forward direction is immediate in light of \eqref{bayor}.  For the second reverse direction, $\T_{0}$ implies $\WKL_{0}$ and $\neg\ACA_{0}$ by definition.  
The latter implies $\neg(\exists^{2})$, i.e.\ all functions on Cantor space are continuous, and the fan functional as in $\FF$ readily yields $(\kappa_{0}^{3})$.  
\end{proof}
Finally, the negation of $\HBU$ also occurs naturally as follows, where we recall:
\be\tag{$\UATR$}
(\exists \Phi^{1\di 1})(\forall X^{1}, f^{1})\big[\WO(X)\di H_{f}(X, \Phi(X,f)) \big], 
\ee
\begin{thm}\label{kukuku}
The system $\RCAo+\FF+\QFAC^{2,1}$ proves $(\exists^{2})\asa [\UATR\vee \neg \HBU]$. 
\end{thm}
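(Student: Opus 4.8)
The plan is to prove the two implications of $(\exists^{2})\asa[\UATR\vee\neg\HBU]$ separately, handling the reverse direction by showing that \emph{both} disjuncts already imply $(\exists^{2})$, and the forward direction by the law of excluded middle applied to $\HBU$ itself. For the reverse direction, recall that $\FF\di\WKL$ holds over $\RCAo$ (by \cite{kohlenbach4}*{Prop.\ 4.10}), so $\WKL$ is available throughout. The implication $\neg\HBU\di(\exists^{2})$ is then obtained by contraposition exactly as in the proof of Theorem \ref{hingie}: assuming $\neg(\exists^{2})$, all functionals on $\N^{\N}$ are continuous by \cite{kohlenbach2}*{Prop.\ 3.7}, hence uniformly continuous on $C$ by $\WKL$, so each has a finite modulus, which immediately yields $\HBU_{\c}$ and hence $\HBU$ (using $\HBU_{\c}\asa\HBU$ from \cite{dagsamIII}*{Theorem 3.3}). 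For the implication $\UATR\di(\exists^{2})$ I would apply the recursion functional $\Phi$ to a one-step well-ordering $X$ and an $f$ coding the one-step operator $\theta(0,Z)\equiv(\exists m)(g(m)=0)$ relative to an arbitrary $g^{1}$: then the single bit of $\Phi(X,f_{g})$ at $0$ decides $(\exists m)(g(m)=0)$ uniformly in $g$, which is exactly $(\exists^{2})$; since $f_{g}$ is a term in $g$ this goes through over $\RCAo$.

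For the forward direction I would argue in $\RCAo+\FF+\QFAC^{2,1}+(\exists^{2})$ and split on $\HBU\vee\neg\HBU$. In the case $\neg\HBU$ the right disjunct holds and we are done, so the entire content lies in the case $\HBU$, where I must derive $\UATR$ from $(\exists^{2})+\HBU+\QFAC^{2,1}$. My route has two stages. First, establish the \emph{non-uniform} arithmetical transfinite recursion statement $(\forall X^{1},f^{1})[\WO(X)\di(\exists Y^{1})H_{f}(X,Y)]$ from $(\exists^{2})+\HBU$. Second, since $H_{f}(X,Y)$ is arithmetical in its parameters, $(\exists^{2})$ converts it to an equivalent quantifier-free matrix, so the instance-wise statement has the form required by quantifier-free choice; coding the pair $(X,f)$ as a single type-one object and feeding it to $\QFAC^{2,1}$ then produces the single functional $\Phi^{1\di1}$ witnessing $\UATR$. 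The law of excluded middle finishes the proof.

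The main obstacle is the first stage of the $\HBU$-case, namely extracting the non-uniform $\ATR_{0}$ statement from $(\exists^{2})+\HBU$; this is precisely the point where the Heine-Borel property for the canonical (generally discontinuous) cover must be converted into a transfinite recursion along a well-ordering, and I expect to import this analysis of $\HBU$ from \cite{dagsamIII,dagsamV} rather than reprove it from scratch. A secondary, bookkeeping-level point is matching the choice principle: one must check that $\QFAC^{2,1}$ genuinely suffices to uniformize the instance-wise existence into the functional $\Phi^{1\di1}$ of $\UATR$. This I handle by the reduction $\QFAC^{2,1}\di\QFAC^{1,1}$, which holds over $\RCAo$: given a quantifier-free $A$ with $(\forall x^{1})(\exists y^{1})A(x,y)$, one sets $x_{\xi}:=\lambda n.\xi(\lambda k.n)$, applies $\QFAC^{2,1}$ to $(\forall \xi^{2})(\exists y^{1})A(x_{\xi},y)$ to obtain $Y^{2\di1}$, and then defines $\Phi:=\lambda x.\,Y(\lambda g.\,x(g(0)))$, which witnesses $\QFAC^{1,1}$. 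Everything else — the continuity facts, $\FF\di\WKL$, and the jump computation for $\UATR\di(\exists^{2})$ — is routine given the earlier results in the paper.
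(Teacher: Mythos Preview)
Your proposal is correct and follows essentially the same route as the paper: both argue the forward direction by splitting on $\HBU\vee\neg\HBU$ and, in the $\HBU$-case, import the derivation of $\UATR$ from $(\exists^{2})+\HBU$ (together with $\QFAC^{2,1}$) from the Normann--Sanders papers; for the reverse direction both show $[\FF+\neg\HBU]\di(\exists^{2})$ via the continuity dichotomy, the paper routing this through $\MUC\di\HBU$ and Theorem~\ref{kwazoeloe} while you route it through Theorem~\ref{hingie}. Your write-up is in fact more complete than the paper's: the paper silently treats $\UATR\di(\exists^{2})$ as obvious, whereas you spell out the one-step jump extraction, and you also make explicit the $\QFAC^{2,1}\di\QFAC^{1,1}$ reduction needed to package the instance-wise $\ATR$ into the functional $\Phi^{1\to1}$.
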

\begin{proof}
For the forward implication, consider $\HBU\vee \neg \HBU$.  In the former case, we obtain $\UATR$ by \cite{dagsam}*{Cor.\ 6.6} and \cite{dagsamIII}*{Theorem 3.3}. 
For the reverse implication, $[\neg\HBU +\FF] \di (\exists^{2}) $, which follows from $\MUC\di \HBU$ and Theorem \ref{kwazoeloe}.
\end{proof}
\begin{rem}\label{DFG}\rm
It is a natural RM-question, posed previously by Hirschfeldt (see \cite{montahue}*{\S6.1}), whether the extra axioms are needed in the base theory of Theorem \ref{kukuku}.  
The answer is positive in this case: the $\ECF$-translation converts the equivalence in the theorem to $(0=1)\asa [(0=1)\vee \neg \WKL]$, which is only true if $\WKL$ (which is exactly $[\FF]_{\ECF}$) is in the base theory.  Hence, the base theory needs $\WKL$.  
\end{rem}
The above results, \eqref{corkukkk} and \eqref{xxxxx} in particular, suggests that mathematical naturalness does not inherit to disjuncts, which is in accordance with our intuitions. 

\subsection{Other covering theorems}\label{others}
We study two covering lemmas related to $\HBU$, namely the Lindel\"of lemma and a weak version of $\HBU$. 
\subsubsection{The Lindel\"of lemma}
We study splittings and disjunctions for the \emph{Lindel\"of lemma} $\LIN$ from \cite{dagsamIII}.  
We stress that our formulation of $\HBU$ and $\LIN$ is faithful to the original theorems from 1895 and 1903 by Cousin (\cite{cousin1}) and Lindel\"of (\cite{blindeloef}).  
\bdefi[$\LIN$] 
For every $\Psi:\R\di \R^{+}$, there is a sequence of open intervals $\cup_{n\in \N}(a_{n}, b_{n})$ covering $\R$ such that $(\forall n \in\N)(\exists x \in \R)[(a_{n}, b_{n}) = I_{x}^{\Psi} ]$.  
\edefi
The final result in the following theorem should be compared to \eqref{corkukkk}.
\begin{thm} \label{linnenkes}
Let $\X\in \L_{2}$ be such that $\ACA_{0}\di \X\di \WKL_{0} $.  
\begin{enumerate}
 \renewcommand{\theenumi}{\alph{enumi}}
\item The system $\RCAo+\QFAC^{0,1}$ proves $\LIN\asa [\HBU \vee \neg\WKL]\asa [\HBU\vee\neg \X]$.   
\item If $\Y\in \L_{2}$ is provable in $\ACA_{0}$ but not in $\RCA_{0}$, then $\RCAo$ proves $\Y\vee \LIN$, as well as $\FIVE\vee (\exists^{2})\vee \LIN$.
\end{enumerate}
\end{thm}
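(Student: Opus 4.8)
The plan is to reduce both items to a single key lemma — that $\neg(\exists^{2})$ implies $\LIN$ — together with the case distinction $(\exists^{2})\vee\neg(\exists^{2})$. For the key lemma, work in $\RCAo+\neg(\exists^{2})$, so that every $Y^{2}$, and in particular the $\Phi^{1\di1}$ representing $\Psi$, is continuous on Baire space by \cite{kohlenbach2}*{Prop.\ 3.7}. Enumerate the rationals as $(q_{n})_{n\in\N}$ and form the sequence of canonical intervals $(a_{n},b_{n}):=I_{q_{n}}^{\Psi}=(q_{n}-\Psi(q_{n}),q_{n}+\Psi(q_{n}))$, a legitimate sequence of reals obtained by applying $\Phi$ at each $q_{n}$. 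I claim it covers $\R$: given $y\in\R$, continuity of $\Psi$ at $y$ yields $\delta>0$ with $\Psi(q)>\Psi(y)/2$ whenever $|q-y|<\delta$, and choosing a rational $q_{n}$ with $|q_{n}-y|<\min(\delta,\Psi(y)/2)$ gives $|y-q_{n}|<\Psi(y)/2<\Psi(q_{n})$, i.e.\ $y\in I_{q_{n}}^{\Psi}$. Hence $\LIN$ holds, with no appeal to choice.

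For item (a), the forward direction of the first equivalence is $\LIN\wedge\WKL\di\HBU$: the sequence from $\LIN$ covers $\R$, hence $[0,1]$, and since each $(a_{n},b_{n})$ equals $I_{x}^{\Psi}$ for its midpoint $x=(a_{n}+b_{n})/2$, the Heine--Borel theorem for countable covers of $[0,1]$ available in $\WKL_{0}^{\omega}$ (\cite{simpson2}*{IV.1}) extracts finitely many such intervals, whose midpoints witness $\HBU$; this yields $\LIN\di[\HBU\vee\neg\WKL]$. For the reverse direction, $\neg\WKL\di\neg(\exists^{2})\di\LIN$ via the key lemma and $(\exists^{2})\di\WKL$, while $\HBU\di\LIN$ is obtained by cases: under $\neg(\exists^{2})$ the key lemma already gives $\LIN$, and under $(\exists^{2})$ one rescales $\HBU$ by an affine map to each interval $[-n,n]$ and uses $\exists^{2}$ to render the (finite) covering check quantifier-free, so that $\QFAC^{0,1}$ selects a finite subcover $s_{n}$ of $[-n,n]$ for every $n$; concatenating the $s_{n}$ produces the countable cover of $\R=\cup_{n}[-n,n]$ required by $\LIN$. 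The second equivalence is then propositional bookkeeping together with strength facts: $\X\di\WKL$ gives $\neg\WKL\di\neg\X$, and conversely $\neg\X\di\neg\ACA_{0}\di\neg(\exists^{2})$ (since $\ACA_{0}\di\X$ and $(\exists^{2})\di\ACA_{0}$), so that under $\neg\X$ either $\neg\WKL$ holds, or $\WKL$ holds and then $\neg(\exists^{2})+\WKL$ yields $\HBU_{\c}$ — hence $\HBU$ by \cite{dagsamIII}*{Theorem 3.3} — exactly as in the proof of Theorem \ref{hingie}.

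For item (b) the disjunctions fall out of the law of excluded middle applied to $(\exists^{2})$. When $\neg(\exists^{2})$, the key lemma gives $\LIN$, the rightmost disjunct. When $(\exists^{2})$, we are in $\ACAo\equiv\RCAo+(\exists^{2})$, which proves the same sentences as $\ACA_{0}$ by \cite{hunterphd}*{Theorem 2.5}; since $\Y\in\L_{2}$ is provable in $\ACA_{0}$, we obtain $\Y$, whence $\RCAo\di\Y\vee\LIN$. For $\FIVE\vee(\exists^{2})\vee\LIN$ the same split proves $(\exists^{2})\vee\LIN$ outright (here $(\exists^{2})$ is itself the middle disjunct), and the three-way disjunction is a mere weakening, the extra disjunct $\FIVE$ serving only to exhibit independence via the $\ECF$-translation as in Corollary \ref{hunsruck}.

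The main obstacle is the $\HBU\di\LIN$ step under $(\exists^{2})$: one must check that, after rescaling to $[-n,n]$, the predicate ``$s$ codes a finite subcover of $[-n,n]$'' is genuinely expressible quantifier-free relative to $\exists^{2}$. This rests on the fact that whether finitely many intervals with real endpoints cover a fixed closed interval is a decidable, finite condition once real comparisons are available (and these are decided by $\exists^{2}$), so that $\QFAC^{0,1}$ applies and the base theory $\RCAo+\QFAC^{0,1}$ indeed suffices. Everything else is the key lemma, routine affine rescaling of $\HBU$, or the propositional manipulation of the strength inequalities $(\exists^{2})\di\ACA_{0}\di\X\di\WKL$.
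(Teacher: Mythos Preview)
Your proof is correct and follows essentially the same strategy as the paper: the key lemma that $\neg(\exists^{2})\di\LIN$ via continuity and the rational sub-cover, together with the excluded-middle splits $(\exists^{2})\vee\neg(\exists^{2})$ and $\WKL\vee\neg\WKL$, is exactly what the paper does (the paper cites \cite{kohlenbach2}*{Prop.~3.12} for continuity on $\R$ rather than your Prop.~3.7 on Baire space, but either works).

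The one genuine difference is that the paper handles $\HBU\di\LIN$ and $[\LIN+\WKL]\di\HBU$ by a single citation of \cite{dagsamIII}*{Theorem 3.13}, which already gives $[\LIN+\WKL]\asa\HBU$ over $\RCAo+\QFAC^{0,1}$. You instead reprove both directions from scratch: the forward one via countable Heine--Borel and the midpoint observation, and the reverse one under $(\exists^{2})$ by rescaling $\HBU$ to $[-n,n]$, making the finite-cover predicate quantifier-free with $\exists^{2}$, and invoking $\QFAC^{0,1}$. This is more self-contained and makes transparent exactly where $\QFAC^{0,1}$ is used, at the cost of some extra work; the paper's citation is shorter but opaque. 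Your treatment of the second equivalence (routing $\neg\X$ through $\HBU_{\c}$ via Theorem~\ref{hingie}) is slightly more circuitous than necessary, since $\neg\X\di\neg(\exists^{2})\di\LIN$ already follows directly from your key lemma, but it is not wrong.
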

\begin{proof}
For the first item, $\RCAo+\QFAC^{0,1}$ proves $[\LIN+\WKL]\asa \HBU$ by \cite{dagsamIII}*{Theorem 3.13}.  Hence, the first forward implication follows from $\WKL\vee \neg\WKL$. 
For the first reverse implication, $\LIN$ follows from $\HBU$ by the aforementioned equivalence.  In case $\neg\WKL$ holds, we also have $\neg(\exists^{2})$, as $(\exists)^{2}\di \WKL$.  
Hence, all functionals on $\R$ are continuous by \cite{kohlenbach2}*{Prop.\ 3.12}, and the countable sub-cover provided by the rationals suffices for the conclusion of $\LIN$.   The second 
equivalence follows in the same way by considering $\X\vee \neg \X$.  
For the second item, consider $(\exists^{2})\vee \neg(\exists^{2})$.  
\end{proof}
We now obtain a nice corollary to Theorems \ref{kukuku} and \ref{linnenkes}. In light of Remark~\ref{DFG}, $\WKL$ also suffices for the base theory in the latter theorem.  
\begin{cor}\label{kukuku27}
The system $\WKL_{0}^{\omega}+\QFAC^{2,1}$ proves $(\exists^{2})\asa [\UATR\vee \neg \HBU]$.  
The system $\RCAo+\QFAC^{2,1}$ proves $(\exists^{2}) \asa [\UATR\vee \neg\LIN]$.
\end{cor}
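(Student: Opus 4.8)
The plan is to read both biconditionals off Theorems \ref{kukuku} and \ref{linnenkes} by applying the law of excluded middle to $(\exists^{2})$, as anticipated by Remark \ref{DFG}. The single external input I need is that $\UATR\di(\exists^{2})$ over $\RCAo$: the functional supplied by $\UATR$ computes the Turing jump uniformly in its function parameter (feed it a one-point well-ordering together with the $\theta$ built from a $\Sigma_{1}^{0}$-formula in $f$), and a single jump already decides $(\exists n)(g(n)=0)$ for all $g$. In particular $\neg(\exists^{2})\di\neg\UATR$, which is what lets me discard the $\UATR$-disjunct whenever $\neg(\exists^{2})$ holds. I also record that $\QFAC^{2,1}\di\QFAC^{0,1}$ over $\RCAo$: coding $n^{0}$ as the constant functional $\lambda g^{1}.n$ and using $\lambda$-abstraction turns any instance of $\QFAC^{0,1}$ into one of $\QFAC^{2,1}$, so Theorem \ref{linnenkes}(a) is available throughout the second part.

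For the first biconditional I work in $\WKL_{0}^{\omega}+\QFAC^{2,1}$ and show directly that $\WKL$ can do the single job that $\FF$ did in Theorem \ref{kukuku}. For the forward implication I assume $(\exists^{2})$ and split on $\HBU\vee\neg\HBU$: in the first case $\HBU\di\UATR$ holds by \cite{dagsam}*{Cor.\ 6.6} and \cite{dagsamIII}*{Theorem 3.3} (this step consumes $(\exists^{2})$ and $\QFAC^{2,1}$, not $\FF$), and in the second case $\neg\HBU$ is the desired disjunct. For the reverse implication I note that both disjuncts imply $(\exists^{2})$: the $\UATR$-disjunct by the auxiliary fact, and the $\neg\HBU$-disjunct by contraposition of $\neg(\exists^{2})\di\HBU$. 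This last implication is exactly the argument in the proof of Theorem \ref{hingie}: under $\neg(\exists^{2})$ every $Y^{2}$ is continuous on Baire space (\cite{kohlenbach2}*{Prop.\ 3.7}), hence uniformly continuous, and so bounded, on $C$ by $\WKL$ (\cite{kohlenbach4}*{Prop.\ 4.10}), whence $\HBU_{\c}$ and then $\HBU$ by \cite{dagsamIII}*{Theorem 3.3}. Crucially this route to $\HBU$ needs only the uniform bound, not a selected modulus, so no $\QFAC^{2,0}$ and no full $\MUC$ enter, and $\WKL$ genuinely replaces $\FF$.

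For the second biconditional I work in the weaker $\RCAo+\QFAC^{2,1}$ and feed in $\LIN\asa[\HBU\vee\neg\WKL]$ from Theorem \ref{linnenkes}(a). For the reverse implication I again show both disjuncts imply $(\exists^{2})$: $\UATR$ by the auxiliary fact, and $\neg\LIN$ by contraposition of $\neg(\exists^{2})\di\LIN$, where the latter is the final step of the proof of Theorem \ref{linnenkes}, namely that under $\neg(\exists^{2})$ all functionals on $\R$ are continuous (\cite{kohlenbach2}*{Prop.\ 3.12}) so the subcover indexed by the rationals already witnesses $\LIN$. For the forward implication I assume $(\exists^{2})$, note that then $\WKL$ holds, and split on $\HBU\vee\neg\HBU$: the case $\HBU\di\UATR$ is as above, while in the case $\neg\HBU$ the equivalence $\LIN\asa[\HBU\vee\neg\WKL]$ together with $\WKL$ collapses to $\neg\HBU\di\neg\LIN$, delivering the disjunct $\neg\LIN$.

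The main obstacle is not a hard argument but the careful separation of which hypotheses each branch really uses, and it is precisely this separation that explains the two different base theories. In the first part the step $\neg(\exists^{2})\di\HBU$ genuinely needs $\WKL$ to upgrade pointwise to uniform continuity on $C$, so $\WKL$ cannot be dropped there (consistent with Remark \ref{DFG}); in the second part the parallel step $\neg(\exists^{2})\di\LIN$ rests only on continuity on $\R$ and the rational subcover and so goes through in $\RCAo$ alone, which is exactly why the $\LIN$-version weakens the base theory from $\WKL_{0}^{\omega}$ to $\RCAo$. Beyond this, the only ingredient imported from outside Theorems \ref{kukuku} and \ref{linnenkes} is $\UATR\di(\exists^{2})$; everything else is reassembled from those theorems, the continuity facts of \cite{kohlenbach2}, and the elementary reduction $\QFAC^{2,1}\di\QFAC^{0,1}$.
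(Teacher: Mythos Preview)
Your proof is correct and follows essentially the same route as the paper: the forward directions split on $\HBU\vee\neg\HBU$ and invoke \cite{dagsam}*{Cor.\ 6.6} and \cite{dagsamIII}*{Theorem 3.3}, while the reverse directions use Theorem~\ref{hingie} (resp.\ the rational-subcover argument from Theorem~\ref{linnenkes}) to get $\neg\HBU\di(\exists^{2})$ under $\WKL$ (resp.\ $\neg\LIN\di(\exists^{2})$ in $\RCAo$), and the second equivalence is read off Theorem~\ref{linnenkes}(a). You make explicit two points the paper leaves tacit---namely $\UATR\di(\exists^{2})$ and the reduction $\QFAC^{2,1}\di\QFAC^{0,1}$---which is a welcome clarification rather than a deviation.
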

\begin{proof}
The first forward implication follows as in the proof of Theorem \ref{kukuku}.  For the first reverse implication, the case $\neg \HBU$ implies $(\exists^{2})$ by considering \eqref{hingie}.  
 The second equivalence now follows from the first item of Theorem \ref{linnenkes}.
\end{proof}
\noindent
Finally, we discuss foundational implications of our results.  Now, \eqref{corkukkk} implies:  
\be\label{dddf}
\neg \LIN\asa [\WKL+\neg\HBU] \textup{ and } \neg\LIN \di (\exists^{2}).  
\ee
On one hand, thanks to the $\ECF$-translation, $\WKL_{0}^{\omega}+\HBU$ is a conservative extension of $\WKL_{0}$, which in turn is a $\Pi_{2}^{0}$-conservative extension of primitive recursive arithmetic \textsf{PRA}.  
The latter is generally believed to correspond to Hilbert's \emph{finitistic mathematics} (\cite{tait1}).      
Hence, following Simpson's remarks on finitistic mathematics (\cite{simpson2}*{IX.3.18}), $\WKL_{0}^{\omega}+\HBU$ also contributes to the partial realisation of Hilbert's program for the foundations of mathematics. 
On the other hand, $\RCAo+\WKL+\neg\HBU$ and $\RCAo+\QFAC^{0,1}+\neg\LIN$ imply $(\exists^{2})$, i.e.\ these systems do not contribute to Hilbert's program in the aforementioned way.  

\smallskip

Hence, if one values partial realisations of Hilbert program (which are called `very important' by Simpson in \cite{simpson2}*{IX.3.18}), then $\HBU$ and $\LIN$ are practically forced upon one, in light of the previous.   
However, these covering lemmas require full second-order arithmetic as in $\Z_{2}^{\Omega}$ for a proof, i.e.\ they fall \emph{far} outside of the \emph{Big Five} classification of RM.  

\smallskip
Finally, we consider the Lindel\"of lemma for Baire space, denoted $\LIN(\N^{\N})$ and studied in \cite{dagsamIII, dagsamV}.
Similar to Corollary \ref{hunsruck}, one can prove the following equivalence:
\be\label{hoerah}
[\WKL\vee \LIN(\N^{\N})]\asa[\FIVE \vee (\exists \Xi)\LIN(\Xi) \vee (\exists^{2})\vee \HBU_{\c}] 
\ee
where $(\exists \Xi)\LIN(\Xi)$ states the existence of a functional $\Xi^{2\di (0\di 1)}$ that outputs the countable sub-cover from $\LIN(\N^{\N})$.

\subsubsection{Weak Heine-Borel compactness}\label{WHBU}
We study $\WHBU$, a weak version of $\HBU$ based on \emph{weak weak K\"onig's lemma} ($\WWKL$ hereafter; see \cite{simpson2}*{X.1}).  
Note that $\WWKL$ is exceptional in that it is a theorem from the RM zoo that does sport a number of equivalences involving natural/mathematical statements.    

\smallskip

In particular, by \cite{simpson2}*{X.1.9}, $\WWKL$ is equivalent to the statement that any cover $\cup_{n\in \N}(a_{n}, b_{n})\subset [0,1]$ is such that $\sum_{n=0}^{\infty}|a_{n}-b_{n}|\geq 1$, which is of independent\footnote{It is an interesting historical tidbit that a two-dimensional version of \cite{simpson2}*{X.1.9.3} was Borel's motivation for formulating and proving the (countable) Heine-Borel theorem (\cite{opborrelen}*{p.\ 50, Note}).} historical interest.  
We define the higher-order version of this covering theorem as:
\be\tag{$\WHBU$}\textstyle
(\forall \Psi:\R\di \R^{+}, k\in \N)(\exists \langle y_{1}, \dots, y_{n}\rangle)\big( \sum_{i=1}^{n}  |I_{y_{i}}^{\Psi}|\geq 1-\frac{1}{2^{k}}   \big).
\ee
We could also use the statement $\HBU_{\textsf{ml}}$ from \cite{samcie18}*{\S3.3} instead of $\WHBU$, but the latter is more elegant, and does not depend on 
the notion of randomness.  
\begin{thm} Let $\X\in \L_{2}$ be such that $\ACA_{0}\di \X\di \WWKL_{0} $.  
$\RCAo$ proves
\be\label{ohjee}
\WWKL\asa [(\exists^{2})\vee \WHBU]\asa [\X\vee \WHBU].  
\ee
\end{thm}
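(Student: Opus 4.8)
The plan is to mimic the proof of Theorem \ref{hingie}, replacing $\WKL$ by $\WWKL$, $\HBU_{\c}$ by $\WHBU$, and the qualitative compactness input by the measure characterisation of $\WWKL$ in \cite{simpson2}*{X.1.9}. I first treat the central equivalence $\WWKL\asa[(\exists^{2})\vee\WHBU]$, and then reduce the outer equivalence to it using the bounds $\ACA_{0}\di\X\di\WWKL_{0}$.

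For the forward implication $\WWKL\di[(\exists^{2})\vee\WHBU]$, I invoke the law of excluded middle on $(\exists^{2})$. If $(\exists^{2})$ holds, the left disjunct is established. If $\neg(\exists^{2})$, then every $Y^{2}$, and in particular every $\Psi:\R\di\R^{+}$, is continuous by \cite{kohlenbach2}*{Prop.\ 3.7}, so enumerating the rational centres $q_{0},q_{1},\dots$ yields a countable subcover $\cup_{m}I_{q_{m}}^{\Psi}\supseteq[0,1]$: indeed $x\in I_{x}^{\Psi}$, and continuity produces a rational $q$ with $|x-q|<\Psi(q)$, i.e.\ $x\in I_{q}^{\Psi}$. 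By \cite{simpson2}*{X.1.9} and $\WWKL$, the length-sum satisfies $\sum_{m}|I_{q_{m}}^{\Psi}|\geq 1$, so the nondecreasing partial sums exceed $1-\frac{1}{2^{k}}$ at some finite stage $N$, which I locate by an unbounded search that is justified in $\RCAo$ since its target is known to exist. The sequence $\langle q_{0},\dots,q_{N}\rangle$ then witnesses $\WHBU$ for this $\Psi$ and $k$.

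For the reverse implication, $(\exists^{2})\di\ACA_{0}\di\WWKL_{0}$ gives $(\exists^{2})\di\WWKL$ at once, so it remains to prove $\WHBU\di\WWKL$, which I expect to be the main obstacle. I would argue by contraposition: a failure of $\WWKL$ also refutes $\WKL$, hence gives $\neg(\exists^{2})$, and supplies an infinite binary tree $T$ of positive measure $\delta>0$ with no path. The functional $G(f):=\mu n.(\overline{f}n\notin T)$ is then well defined (every $f$ leaves $T$) and manifestly definable in $\RCAo$, which is the crucial point: working from a concrete counterexample tree, rather than from an arbitrary cover, avoids the definability problems one would hit trying to build a centred $\Psi$ without $(\exists^{2})$. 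The canonical Cantor-space cover associated to $G$ has the key property that any finite subcover $\cup_{i\leq m}[\sigma_{i}]$ uses strings $\sigma_{i}\notin T$, which are incomparable with every length-$N$ node of $T$ for $N=\max_{i}|\sigma_{i}|$; since $|T_{N}|/2^{N}\geq\delta$, the subcover has measure $\leq 1-\delta$. Transferring this cover to $[0,1]$ along the measure-preserving binary-expansion map, as in the equivalence $\HBU\asa\HBU_{\c}$ of \cite{dagsamIII}*{Theorem 3.3}, produces a $\Psi$ whose finite subcovers never reach measure $1-\frac{1}{2^{k}}$ once $\frac{1}{2^{k}}<\delta$, contradicting $\WHBU$.

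Finally, the outer equivalence $[(\exists^{2})\vee\WHBU]\asa[\X\vee\WHBU]$ follows formally by carrying the common disjunct $\WHBU$ across both directions: $(\exists^{2})\di\ACA_{0}\di\X$ handles the left-to-right replacement, while $\X\di\WWKL_{0}\di\WWKL\asa[(\exists^{2})\vee\WHBU]$, using the central equivalence just proved, handles the right-to-left one. The delicate points I anticipate are producing a genuinely $\RCAo$-definable $\Psi$ from the counterexample tree and correctly transporting the measure bound between Cantor space and $[0,1]$, for both of which I rely on the machinery behind \cite{dagsamIII}*{Theorem 3.3}.
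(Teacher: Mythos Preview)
Your proof follows the paper's approach: case-split on $(\exists^{2})\vee\neg(\exists^{2})$ for the forward direction and, in the $\neg(\exists^{2})$ case, use the rational subcover together with \cite{simpson2}*{X.1.9}, exactly as the paper does. The paper dispatches the reverse implication in a single clause (``trivial in light of \cite{simpson2}*{X.1.9}''), while you unpack $\WHBU\to\WWKL$ via the natural contrapositive built from a positive-measure pathless tree; this is the expected elaboration of what the paper leaves implicit, and your reduction of the outer equivalence to the inner one via $(\exists^{2})\to\ACA_{0}\to\X$ and $\X\to\WWKL$ is equivalent to the paper's direct treatment.

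Two small points are worth tightening. First, in your contrapositive you bound the \emph{measure of the union} of a finite subcover by $1-\delta$, but $\WHBU$ as literally written concerns the \emph{sum} $\sum_i|I_{y_i}^{\Psi}|$, which can exceed the measure of the union when centres $y_i$ yield overlapping or repeated intervals. In your construction the minimal leaving-nodes of $T$ form an antichain, so distinct leaving-nodes give disjoint cylinders; but distinct $y_i$ need not give distinct leaving-nodes, so you should either note explicitly that the intended reading of $\WHBU$ is the measure-of-union one (as the analogy with \cite{simpson2}*{X.1.9} and the reference to $\HBU_{\textsf{ml}}$ suggest), or add a line handling repeated cylinders. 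Second, for the continuity of $\Psi:\R\to\R^{+}$ under $\neg(\exists^{2})$ the pertinent reference is \cite{kohlenbach2}*{Prop.~3.12} (functions on $\R$) rather than Prop.~3.7 (type-two functionals on $\N^{\N}$).
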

\begin{proof}
This theorem is proved in the same way as Theorem \ref{hingie}.  
Indeed, for the first forward implication, consider $(\exists^{2})\vee \neg(\exists^{2})$ and note that in the latter case $\cup_{q\in [0,1]\cap \Q}I_{q}^{\Psi}$ is a countable sub-cover of the canonical cover since all functions are continuous.  
The first reverse implication is trivial in light of \cite{simpson2}*{X.1.9}, and the other equivalences are proved similarly (and using Theorem \ref{hingie}). 
\end{proof}
Similar to Corollary \ref{hunsruck}, we have the following corollary.  
\begin{thm} Let $\X\in \L_{2}$ be such that $\X\di \WWKL_{0} $.  
$\RCAo$ proves
\be\label{dagohjee}
\WWKL\asa [\FIVE\vee (\exists^{2})\vee \WHBU]\asa [\X\vee\FF \vee  \WHBU].  
\ee
\end{thm}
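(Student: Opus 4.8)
The plan is to mirror the proof of the preceding theorem establishing \eqref{ohjee}, since the statement \eqref{dagohjee} stands to \eqref{ohjee} exactly as Corollary \ref{hunsruck} stands to Theorem \ref{hingie}. First I would establish the chain $\WWKL\asa [\FIVE\vee (\exists^{2})\vee \WHBU]$. The reverse direction is immediate: each disjunct implies $\WWKL$, since $\FIVE$ follows from $(\exists^{2})\di \ACA_{0}$ (indeed $(S^{2})\di (\exists^{2})$), while $(\exists^{2})\di\WWKL_{0}$ directly, and $\WHBU\di \WWKL$ by \cite{simpson2}*{X.1.9} (as in the proof of \eqref{ohjee}). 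For the forward direction, I would invoke the law of excluded middle on $(\exists^{2})\vee \neg(\exists^{2})$: in the former case we are done, and in the latter case all functionals on $\R$ are continuous, so the countable sub-cover $\cup_{q\in [0,1]\cap\Q}I_q^{\Psi}$ of the canonical cover witnesses $\WHBU$ (since it actually covers all of $[0,1]$, hence has full measure).

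Second, I would handle the equivalence $\WWKL\asa [\X\vee\FF \vee \WHBU]$ for $\X\in\L_2$ with $\X\di\WWKL_0$. The reverse implication again follows disjunct-by-disjunct: $\X\di\WWKL_0\di\WWKL$ by hypothesis, $\FF\di\WKL\di\WWKL$ (using $\WKL\di\WWKL$ over $\RCA_0$), and $\WHBU\di\WWKL$ as above. For the forward implication I would once more split on $(\exists^{2})\vee\neg(\exists^{2})$: under $(\exists^{2})$ we obtain $\FF$ (indeed $(\exists^{2})$ gives $\ACA_0$, hence $\WKL$ and the fan functional $\FF$), while under $\neg(\exists^{2})$ the same continuity argument as before delivers $\WHBU$. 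This reuses the machinery of Theorem \ref{hingie} and the proof of \eqref{ohjee} essentially verbatim.

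The genuinely new content, as in Corollary \ref{hunsruck}, is not the derivation of the equivalences but the \emph{independence} of the disjuncts, which is what makes the splitting into three pieces meaningful. For this I would appeal to the $\ECF$-interpretation from Remark \ref{ECFrem}: it fixes $\L_2$-sentences while sending $(\exists^{2})$ (and any genuinely higher-order axiom) to $0=1$. Concretely, $[\FF]_{\ECF}\asa\WKL$, whereas $[(\exists^{2})]_{\ECF}\asa (0=1)$, and $\FIVE$ and $\WWKL$ are $\L_2$-sentences left unchanged. Thus no disjunct is implied by the others: $\FIVE$ genuinely exceeds the strength of $\WWKL$, while $(\exists^{2})$ and $\FF$ are separated from the $\L_2$-disjuncts precisely because the $\ECF$-translation collapses the former but not the latter, and $\FF$ is strictly stronger than $\WWKL$ under $\ECF$ since $[\FF]_{\ECF}\asa\WKL\di\WWKL$ is not reversible over $\RCA_0$.

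I expect the main obstacle to be bookkeeping rather than conceptual difficulty: one must verify carefully that each of the several reverse implications holds over the stated base theory $\RCAo$ alone (without $\QFAC^{2,0}$ or $\WKL$ smuggled in), and that the continuity-based construction of the countable sub-cover under $\neg(\exists^{2})$ genuinely yields the measure-theoretic conclusion of $\WHBU$ and not merely the open-cover conclusion of $\LIN$. The subtle point is confirming that the rational sub-cover has measure at least $1-\tfrac{1}{2^k}$; but since it covers all of $[0,1]$ and we may extract a finite sub-cover of the compact interval once continuity is available, this is unproblematic, and no appeal to randomness is needed precisely because $\WHBU$ is stated via the elegant measure inequality rather than via $\HBU_{\textsf{ml}}$.
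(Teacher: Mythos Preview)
Your proposal is correct and follows the same approach the paper intends (the paper gives no explicit proof, merely pointing to Corollary~\ref{hunsruck} and the argument for \eqref{ohjee}). One small caveat: in your final paragraph you speak of ``extract[ing] a finite sub-cover of the compact interval once continuity is available'', but countable Heine--Borel is $\WKL$, which you do not have here; the correct step---which you already give in your first paragraph---is that the countable rational cover has total length $\geq 1$ by $\WWKL$ (via \cite{simpson2}*{X.1.9}), so some finite partial sum exceeds $1-2^{-k}$, and no actual finite \emph{covering} is needed.
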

\noindent
One also readily proves that \eqref{hoerah} can be extended to $\eqref{hoerah}\asa \WWKL\vee \LIN(\N^{\N})$.

\smallskip

The following version of Corollary \ref{thedamhasbroken} for $\WHBU$ is readily proved based on $\WWKL\vee\Sigma_{2}^{0}\textup{\textsf{-IND}}$ and \eqref{ohjee}.  
We can prove similar results for the \emph{strong bounding principles} and \emph{bounded comprehension principles} instead of induction (\cite{simpson2}*{p.\ 72}).   
\begin{cor} The system $\RCAo$ proves 
\[
 [ \WHBU\vee \Sigma_{2}^{0}\textup{\textsf{-IND}}  ] \asa  [ \WWKL\vee \Sigma_{2}^{0}\textup{\textsf{-IND}} ].  
\]
\end{cor}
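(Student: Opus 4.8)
The plan is to reduce everything to the chain of equivalences \eqref{ohjee}, exactly as Corollary~\ref{thedamhasbroken} is obtained from \eqref{corkukkk}. Working in $\RCAo$, I will prove the two implications of the stated equivalence separately, invoking the excluded-middle instance $(\exists^{2})\vee\neg(\exists^{2})$ only in the one branch where it is actually needed. No auxiliary axioms beyond those already in $\RCAo$ should be required, which matches the announced base theory.

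For the forward direction, assume $[\WHBU\vee\Sigma_{2}^{0}\textup{\textsf{-IND}}]$. If $\Sigma_{2}^{0}\textup{\textsf{-IND}}$ holds we are immediately done, since it is the right-hand disjunct of the conclusion. If instead $\WHBU$ holds, then the reverse implication of the first equivalence in \eqref{ohjee} (which is trivial in light of \cite{simpson2}*{X.1.9}) yields $\WWKL$, and hence $[\WWKL\vee\Sigma_{2}^{0}\textup{\textsf{-IND}}]$.

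For the reverse direction, assume $[\WWKL\vee\Sigma_{2}^{0}\textup{\textsf{-IND}}]$. Again the case $\Sigma_{2}^{0}\textup{\textsf{-IND}}$ is immediate. In the case $\WWKL$, the forward implication of the first equivalence in \eqref{ohjee} gives $[(\exists^{2})\vee\WHBU]$. If $\WHBU$ holds, we obtain the left-hand disjunct directly. If $(\exists^{2})$ holds, then $(\exists^{2})\di\ACA_{0}$ and $\ACA_{0}$ proves induction for every arithmetical formula, in particular $\Sigma_{2}^{0}\textup{\textsf{-IND}}$; thus we land in the right-hand disjunct. This completes the equivalence.

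The argument is essentially mechanical once \eqref{ohjee} is available, so I do not expect a serious obstacle; the only point requiring care is the $(\exists^{2})$ branch in the reverse direction, where one must recall that $\Sigma_{2}^{0}\textup{\textsf{-IND}}$ is recovered not from $\WHBU$ but from the arithmetical comprehension packaged in $(\exists^{2})$, via $(\exists^{2})\di\ACA_{0}$. The analogous statements for the \emph{strong bounding} and \emph{bounded comprehension} principles alluded to in the text would follow by the identical template, replacing $\Sigma_{2}^{0}\textup{\textsf{-IND}}$ by the relevant second-order schema and using that $\ACA_{0}$ proves each of them.
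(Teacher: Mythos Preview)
Your proof is correct and follows essentially the same route the paper intends: it mirrors the proof of Corollary~\ref{thedamhasbroken}, reducing the nontrivial direction to \eqref{ohjee} and then absorbing the comprehension disjunct into $\Sigma_{2}^{0}\textup{\textsf{-IND}}$ via $\ACA_{0}$. The only cosmetic difference is that the paper's template (as in Corollary~\ref{thedamhasbroken}) would invoke the second equivalence of \eqref{ohjee} with $\X=\ACA_{0}$ directly, whereas you pass through $(\exists^{2})\di\ACA_{0}$; the effect is identical.
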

\noindent
We can also obtain a version of Theorem \ref{linnenkes} for $\WHBU$. 
\begin{cor}
 Let $\X\in \L_{2}$ be such that $\ACA_{0}\di \X\di \WWKL_{0} $; the system $\RCAo+\QFAC^{0,1}$ proves $\LIN\asa [\WHBU \vee \neg\WWKL]\asa [\WHBU\vee\neg \X]$.   
\end{cor}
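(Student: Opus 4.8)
The plan is to mirror the proof of Theorem~\ref{linnenkes}(a) almost verbatim, replacing $\HBU$ by $\WHBU$ and $\WKL$ by $\WWKL$. The one new ingredient I would need is the weak analogue of \cite{dagsamIII}*{Theorem~3.13}, namely that $\RCAo+\QFAC^{0,1}$ proves
\[
[\LIN+\WWKL]\asa\WHBU .
\]
Exactly as $[\LIN+\WKL]\asa\HBU$ supplied the two implications $\HBU\di\LIN$ and $[\LIN+\WKL]\di\HBU$ in the original argument, this equivalence supplies $\WHBU\di\LIN$ and $[\LIN+\WWKL]\di\WHBU$, after which both biconditionals of the corollary follow by the same case analysis. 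One might hope to avoid a fresh equivalence and instead combine the already-proved \eqref{ohjee} with Theorem~\ref{linnenkes}, but every route reduces to the single implication $\WHBU\di\LIN$, so I do not expect a shortcut.

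Granting the displayed equivalence, here is the case analysis I would carry out. For $\LIN\di[\WHBU\vee\neg\WWKL]$, split on $\WWKL\vee\neg\WWKL$: the case $\neg\WWKL$ gives the right disjunct for free, while in the case $\WWKL$ the assumed $\LIN$ together with $[\LIN+\WWKL]\di\WHBU$ yields $\WHBU$. For the reverse $[\WHBU\vee\neg\WWKL]\di\LIN$, I treat the two disjuncts: if $\neg\WWKL$ then $\neg\WKL$ (since $\WKL\di\WWKL$) and hence $\neg(\exists^{2})$ (since $(\exists^{2})\di\WKL$), so all functionals on $\R$ are continuous by \cite{kohlenbach2}*{Prop.\ 3.12} and the rational sub-cover $\cup_{q\in[0,1]\cap\Q}I_{q}^{\Psi}$ already witnesses $\LIN$; if instead $\WHBU$ holds, then $\WHBU\di\LIN$ finishes directly. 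The second equivalence $[\WHBU\vee\neg\WWKL]\asa[\WHBU\vee\neg\X]$ goes through identically upon considering $\X\vee\neg\X$ and noting that $\ACA_{0}\di\X$ forces $\neg\X\di\neg\ACA_{0}\di\neg(\exists^{2})$, so the continuity argument applies verbatim in the case $\neg\X$.

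The real work is the displayed equivalence, whose two halves have very different difficulty. The implication $[\LIN+\WWKL]\di\WHBU$ is routine: $\LIN$ gives a countable sub-cover $\cup_{n}(a_{n},b_{n})$ of $\R$ with each $(a_{n},b_{n})=I_{x_{n}}^{\Psi}$, which in particular covers $[0,1]$; by the measure characterisation of $\WWKL$ in \cite{simpson2}*{X.1.9} we get $\sum_{n}|a_{n}-b_{n}|\geq 1$, so for each $k$ some finite partial sum already exceeds $1-\frac{1}{2^{k}}$, and taking the centres $y_{i}:=\frac{a_{i}+b_{i}}{2}=x_{i}$ (so that $|I_{y_{i}}^{\Psi}|=b_{i}-a_{i}$) produces the finite sequence demanded by $\WHBU$, with $\QFAC^{0,1}$ used to extract the sequence of intervals as a single object. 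The implication $\WHBU\di\LIN$ is the main obstacle: it is precisely the passage from finite families of canonical intervals of total \emph{measure} at least $1-\frac{1}{2^{k}}$ to a genuine countable \emph{sub-cover} of $\R$. This is subtle exactly because, once $(\exists^{2})$ is present and $\Psi$ may be discontinuous, the union of the $\WHBU$-families need only have full measure and can miss an uncountable measure-zero residual set, while the rational sub-cover no longer covers, so a naive union argument fails. I would therefore import this implication from (the weak version of) \cite{dagsamIII}*{Theorem~3.13}, where the analogous measure-to-cover step for the covering lemmas is established, and I expect this to be the sole non-routine component.
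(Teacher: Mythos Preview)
Your approach matches the paper's own treatment exactly: the paper gives no explicit proof, merely announcing the corollary as ``a version of Theorem~\ref{linnenkes} for $\WHBU$'', i.e.\ precisely the substitution $\HBU\mapsto\WHBU$, $\WKL\mapsto\WWKL$ that you carry out. You in fact go further than the paper by correctly isolating the one non-routine ingredient---the implication $\WHBU\to\LIN$ (equivalently, over $\RCAo+\QFAC^{0,1}$, the implication $[\WHBU+(\exists^{2})]\to\HBU$, as one sees by combining your reduction with \eqref{corkukkk} and \eqref{ohjee})---and by observing that the naive ``union of the $\WHBU$-witnesses'' argument only yields full measure, not a genuine cover; the paper does not spell this step out either and presumably appeals to the same external analogue of \cite{dagsamIII}*{Theorem~3.13} that you propose to import.
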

Finally, let $(n+1)$-$\WWKL$ be the generalisation of $\WWKL$ to trees computable in the $n$-th Turing jump, as formulated in \cite{avi1337}.
Note that $\ACA_{0}\di (n+2)\textsf{-}\WWKL\di  (n+1)\textsf{-}\WWKL\not\di \WKL$ over $\RCA_{0}$.   While \eqref{ohjee} applies to $\X\equiv (n+1)$-$\WWKL$, we also have the following corollary. 
\begin{cor} For $n\geq 1$, $\RCAo$ proves  $[ \HBU\vee n\textsf{-}\WWKL  ] \asa  [ \WKL\vee n\textsf{-}\WWKL ]$.   
\end{cor}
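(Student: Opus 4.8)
The plan is to treat this equivalence as a purely propositional manipulation of the disjunction $\WKL\asa[\ACA_{0}\vee\HBU]$ already available from \eqref{corkukkk}, so that essentially nothing new has to be proved. The forward implication $[\HBU\vee n\textsf{-}\WWKL]\di[\WKL\vee n\textsf{-}\WWKL]$ is immediate: the left disjunct $\HBU$ implies $\WKL$ (via $\HBU\asa\HBU_{\c}$ and $\HBU_{\c}\di\WKL$), while the right disjunct $n\textsf{-}\WWKL$ is carried over unchanged. Hence only the reverse implication requires attention.

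For the reverse implication, I would first instantiate \eqref{corkukkk} with $\X\equiv\ACA_{0}$, which is permitted since trivially $\ACA_{0}\di\ACA_{0}\di\WKL_{0}$; this yields $\WKL\asa[\ACA_{0}\vee\HBU]$ over $\RCAo$. Substituting this equivalence for the $\WKL$-disjunct gives
\[
[\WKL\vee n\textsf{-}\WWKL]\asa[\ACA_{0}\vee\HBU\vee n\textsf{-}\WWKL].
\]
It then remains to absorb the $\ACA_{0}$-disjunct into $n\textsf{-}\WWKL$. Here I use that, for $n\geq 1$, the leftmost implication of the stated chain $\ACA_{0}\di(n+2)\textsf{-}\WWKL\di(n+1)\textsf{-}\WWKL$ gives $\ACA_{0}\di m\textsf{-}\WWKL$ for every $m\geq 1$, so in particular $\ACA_{0}\di n\textsf{-}\WWKL$. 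Consequently $[\ACA_{0}\vee\HBU\vee n\textsf{-}\WWKL]\asa[\HBU\vee n\textsf{-}\WWKL]$, and chaining the two equivalences completes the proof.

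As a cross-check, and in the spirit of the alternative arguments supplied for Corollaries \ref{thedamhasbroken} and \ref{puilo}, one may instead run the explicit case analysis: assuming $[\WKL\vee n\textsf{-}\WWKL]$, one is done if $n\textsf{-}\WWKL$ holds, and otherwise branches on $(\exists^{2})\vee\neg(\exists^{2})$ under the hypothesis $\WKL$. The case $(\exists^{2})$ yields $\ACA_{0}$, hence $n\textsf{-}\WWKL$; the case $\neg(\exists^{2})$ reproduces the argument of Theorem \ref{hingie} — all type-two functionals are continuous, hence uniformly continuous and thus bounded on Cantor space by $\WKL$ — giving $\HBU_{\c}$, whence $\HBU$ by \cite{dagsamIII}*{Theorem 3.3}. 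I expect no genuine obstacle; the only point demanding care is the hypothesis $n\geq 1$, which is exactly what makes $\ACA_{0}\di n\textsf{-}\WWKL$ available and hence licenses the absorption step. Were $n\textsf{-}\WWKL$ a $\WWKL$-variant not provable in $\ACA_{0}$ (or undefined, as for $n=0$), the $\ACA_{0}$-disjunct could not be discarded and the equivalence would break.
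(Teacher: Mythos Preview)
Your proof is correct and follows essentially the same route as the paper: the forward implication is immediate from $\HBU\di\WKL$, and the reverse implication uses \eqref{corkukkk} to replace $\WKL$ by $\ACA_{0}\vee\HBU$ and then absorbs $\ACA_{0}$ into $n\textsf{-}\WWKL$ via $\ACA_{0}\di n\textsf{-}\WWKL$. Your additional case-analysis cross-check and the remark on why $n\geq 1$ is needed are sound elaborations but go beyond what the paper records.
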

\begin{proof}
The forward implication is immediate, while for the reverse implication follows from \eqref{corkukkk} as $\WKL\di [\ACA_{0}\vee \HBU]\di [n\textsf{-}\WWKL\vee \HBU]$.
\end{proof}
Finally, it is a natural question if there are other theorems in the RM zoo for which we can find results like \eqref{corkukkk} and \eqref{ohjee}.  
We will provide a positive answer for (fragments of) \emph{Ramsey's theorem} in a future publication.  

\section{Foundational musings}\label{opinion}
We provide an explanation for our results regarding splittings and disjunctions in Section \ref{diskol}, following an introduction in Section \ref{fintro}.
The bigger picture is discussed in Sections \ref{skylar} and \ref{diskol2}.
\subsection{Introduction: continuity and discontinuity}\label{fintro}
%
%
%
In the below discussion, a central role is played by \emph{continuity}.  
To be absolutely clear, our use of `continuity' refers to the usual `epsilon-delta' definition of functionals of type two or higher, unless stated otherwise.
By \cite{kohlenbach2}*{Prop.\ 3.7 and~3.12}, the existence of a \emph{discontinuous}, i.e.\ not everywhere continuous, functional is equivalent to $(\exists^{2})$, for both $\N^{\N}$ and $\R$. 

\smallskip

On one hand, it is well-known that $\L_{2}$ provides \emph{representations} for discontinuous functions.  
For instance, measurable functions are represented by sequences of continuous functions in RM (see \cite{simpson2}*{X.1.11}).  Furthermore, the basic theory of Borel functions and analytic sets can be developed in $\ATR_{0}$ \emph{via codes for Borel sets} (see \cite{simpson2}*{V}).  In other words, second-order arithmetic can \emph{model/represent} discontinuous phenomena, and the purpose of this approach is to reconstitute these concepts in a way that accommodates their main applications within regions of the G\"odel hierarchy (see \cite{sigohi}) that are as far down in the hierarchy as possible.  For this kind of purpose, subsystems of $\Z_{2}$ have been tremendously useful.

\smallskip

On the other, for $A\in \L_{2}$ provable in $\Z_{2}$, $\RCAo+A$ cannot prove the existence of a discontinuous function, thanks\footnote{The $\ECF$-translation of $(\exists^{2})$ is `$0=1$', while it does not change $A\in \L_{2}$; see Remark \ref{ECFrem}.} to the $\ECF$-translation and assuming $\Z_{2}$ is consistent. 
Hence, second-order arithmetic can model/represent (certain) discontinuous phenomena, but it cannot prove (in the extended language $\L_{\omega}$) the existence of e.g.\ a discontinuous function like $\exists^{2}$.  
Nonetheless, discontinuous functions entered the mathematical mainstream already around 1850 thanks to Riemann's \emph{Habilschrift}:
\begin{quote}
Riemann’s work may be said to mark the beginning of a theory of the mathematically discontinuous, although there are isolated examples in Fourier’s and Dirichlet’s works. It planted the discontinuous firmly upon the mathematical scene. (\cite{kleine}*{p.\ 116})
\end{quote}
Thus, discontinuous functions are part of ordinary, i.e.\ non-set-theoretical, mathematics, predating the earliest days of set theory.    
Discontinuous phenomena can be \emph{modeled} or \emph{represented} in second-order arithmetic, but the latter cannot prove the existence of the underlying phenomena, even for the most basic case of a discontinuous function on $\R$.
As we will see the next section, the aforementioned limitation of $\L_{2}$ plays an important role in the study of splittings and disjunctions.  

\subsection{Discontinuity: the genesis of splittings and disjunctions}\label{diskol}
By the above, higher-order RM features lots of examples of splittings and disjunctions, esp.\ compared to second-order RM.
We now provide a possible explanation for this observation, i.e.\ we answer the question \emph{why} there are so many splittings and disjunctions in higher-order RM, compared to second-order RM. 

\smallskip

First of all, it goes without saying that the language of higher-order RM  is much richer than the language of second-order arithmetic.  
Hence, more mathematics can be expressed in $\L_{\omega}$, but this observation \emph{alone} does not provide a satisfying explanation.  
The essential observation is that, as discussed in Section~\ref{fintro}, second-order arithmetic cannot directly accommodate discontinuous phenomena, while higher-order arithmetic of course can.
Thus, sentences of $\L_{\omega}$ can be divided in the following three natural categories:
\begin{enumerate}
 \renewcommand{\theenumi}{\alph{enumi}}
\item Sentences implying the existence of discontinuous\footnote{While $\exists^{2}$ is the `archetype' of a discontinuous function, Gandy's `superjump', introduced in \cite{supergandy}, has a characterisation in terms of discontinuities, as discussed in \cite{hartjeS}.} objects.\label{kart}
\item Sentences implying the continuity of a certain class of objects.\label{ley}
\item Sentences that are `neutral' regarding continuity, i.e.\ consistent with all sentences from items \eqref{kart} and \eqref{ley}\label{ley2}.
\end{enumerate}
Items \eqref{kart} and \eqref{ley} are often connected: while $\exists^{2}$ is discontinuous, $\neg(\exists^{2})$ implies that all functions on $\R$ are continuous by \cite{kohlenbach2}*{Prop.\ 3.12}.
In light of the results in \cite{ishicont}, the connection between items \eqref{kart} and \eqref{ley} even exists in constructive mathematics.
Moreover, since it implies $\neg(\exists^{2})$, $\neg \WKL$ belongs to item \eqref{ley}, while $\WKL$ belongs to item \eqref{ley2}.  Thus, $\L_{2}$ is not restricted to \eqref{ley2}, but $\L_{2}$ 
just lacks the expressiveness to state the `logical consequences' of $\neg\WKL$, namely $\neg(\exists^{2})$ and that all functions on $\R$ are continuous.  
To fully appreciate the previous, the reader should now consult the final parts of the proofs of Corollaries~\ref{thedamhasbroken} and~\ref{puilo}.

\smallskip

Secondly, many of the results in the previous sections can be viewed as splitting sentences of $\L_{\omega}$ in weaker (and generally independent) components from items \eqref{kart} or \eqref{ley}, and \eqref{ley2}.
For instance, the trichotomy in the previous paragraph suggests the following way of obtaining splittings: from a sentence $A$ from item \eqref{kart} (resp.\ item \eqref{ley}), 
derive a weaker sentence $B$ expressing some discontinuity (resp.\ continuity) property, and a weaker sentence $C$ from item \eqref{ley2}; $B$ and $C$ should be strong enough to guarantee $A\asa [B+C]$.  
This kind of splitting is obtained in Sections \ref{CoC}-\ref{more}. 

\smallskip

Third, a different but related approach is based on the law of excluded middle, $(\exists^{2})\vee \neg(\exists^{2})$ in particular; other instances are used as well, but the idea is the same, as follows:   
starting from a relatively weak principle $C$, $(\exists^{2})\vee \neg(\exists^{2})$ implies $D\vee E$, where $D, E$ are (much) stronger than $C$. 
In particular, in case $\neg(\exists^{2})$, all functions on the reals are continuous, and uncountable covers then reduce to countable ones.  Hence, the Lindel\"of lemma becomes trivial, while Heine-Borel compactness as in $\HBU$ simply follows from $\WKL$, and $\MUC$ follows from $\FF$, i.e.\ we obtain a (hard to prove) sentence from item \eqref{ley2}, or a sentence from item \eqref{ley}.  
In case $(\exists^{2})$, we are obviously in item \eqref{kart}.  Since $D$ and $E$ both imply $C$, we obtain $C\asa [D \vee E]$.  This kind of disjunction is obtained in Sections \ref{more}-\ref{others}.

\smallskip

In conclusion, $\L_{\omega}$ can represent discontinuous objects \emph{directly}, in contrast to the indirect approach provided by $\L_{2}$.  This particular richness gives rise to the trichotomy above in items \eqref{kart}-\eqref{ley2}.  
Along the lines of the latter, one can obtain plenty of splittings and disjunctions in $\L_{\omega}$, based on the previous two paragraphs.  In other words, the lack of splittings and disjunctions in classical RM 
is due to the weak expressive power of $\L_{2}$, in particular the fact that it can only model, but not prove, discontinuous phenomena.        

\smallskip

Finally, splittings and disjunctions are not the only example of the impact of the limitations of $\L_{2}$.  The following remark presents another one.   
\begin{rem}[Explosions in higher-order arithmetic]\label{sexplosion}\rm
We say that two sentences $A, B$ cause an `explosion' if $A+B$ is much stronger than $A$ or $B$ considered separately (say in $\RCAo$).  We show in this remark that there are natural explosions in $\L_{\omega}$, which disappear in the absence of discontinuous functions.  

\smallskip

First of all, both $(\exists^{2})$ and $\HBU$ are weak \emph{in isolation}, i.e.\ conservative over $\ACA_{0}$, but the combination implies $\ATR_{0}$ by \cite{dagsam}*{\S6}; see also \cite{dagsamIII}*{\S3}.
However, $\ACA_{0}+\HBU$ is conservative\footnote{The $\ECF$-interpretation translates $\HBU$ to $\WKL$, and the latter follows from $\ACA_{0}$.} over $\ACA_{0}$.   

\smallskip

Secondly, $(\exists^{2})$ and the \emph{Lindel\"of lemma for $\N^{\N}$}, denoted $\textsf{LIND}(\N^{\N})$ in \cite{dagsamV}, are weak \emph{in isolation}, i.e.\ conservative over $\ACA_{0}$, but the combination implies $\FIVE$ by \cite{dagsamV}*{\S5}.  
However, $\ACA_{0}+\textsf{LIND}(\N^{\N})$ is conservative\footnote{The $\ECF$-interpretation translates all versions of the Lindel\"of lemma to trivialities.} over $\ACA_{0}$.   

\smallskip

The previous two explosions show that the presence of discontinuous functions has a great impact on the logical strength of (uncountable) covering theorems.  
\end{rem}

\subsection{To be or not to be continuous}\label{skylar}
The results in Section \ref{main} and \cite{dagsamIII, dagsamV, dagsam, dagsamII} identify huge differences between second- and higher-order RM.  
As discussed in the previous section, these results trace back to the fact that higher-order (resp.\ second-order) arithmetic can (resp.\ cannot) \emph{directly} represent discontinuous phenomena.  
Hence, the question arises whether one should adopt the higher-order framework instead of second-order arithmetic for the formalisation of mathematics.  

\smallskip

In this section, we argue that one must adopt the higher-order framework \emph{in either of the following situations}: 
\begin{enumerate}
 \renewcommand{\theenumi}{\alph{enumi}}
\item {if one wants to formalise mathematics in a way close to the original},\label{trunk}
\item  the second-order formalisation should be faithful \emph{in scope} to the original.\label{barf}
\end{enumerate}
As we will see, the caveat in item \eqref{barf} can be summarised as \emph{faithfulness is hard}.  
To be absolutely clear, `faithful' means that the second-order formalisation has the same scope or generality as the original, i.e.\ we are not implying that the formalisation `should look (exactly) like the original'.
The caveat in item \eqref{trunk} does discuss this idea of `close to the original', and is actually inspired by the development of the \emph{gauge integral}, which we discuss first, as follows. 

\smallskip

The gauge integral is a generalisation of the Lebesgue and (improper) Riemann integral;
this integral was introduced by Denjoy (\cite{ohjoy}), in a different and more complicated form, around the same time as the Lebesgue integral; the reformulation of Denjoy's integral by Henstock and Kurzweil in Riemann-esque terms (See \cite{bartle1337}*{p.~15}), provides a \emph{direct} and elegant formalisation of the \emph{Feynman path integral} (\cites{burkdegardener,mullingitover,secondmulling}) and financial mathematics (\cites{mulkerror, secondmulling}).    
In a nutshell, the gauge integral is just the Riemann integral with the constant `$\delta\in \R^{+}$' in the usual $\eps$-$\delta$-definition replaced by a \emph{function} $\delta:\R\di \R^{+}$, a small but significant change. 

\smallskip

Now, the first step in the development of the gauge integral is always to show that this integral is well-defined, using the \emph{Cousin lemma}, which implies $\HBU$.
As shown in \cite{dagsamIII}*{\S3}, $(\exists^{2})$ and $\HBU$ are essential for the development of the \emph{gauge integral} (\cite{bartle1337}) in that the former are equivalent to various basic properties of the gauge integral.  
Furthermore, Cousin's lemma from \cite{cousin1}*{p.\ 22} dates back\footnote{
The collected works of Pincherle contain a footnote by the editors (See \cite{tepelpinch}*{p.\ 67}) which states that the associated \emph{Teorema} (published in 1882) corresponds to the Heine-Borel theorem.  Moreover, Weierstrass proves the Heine-Borel theorem (without explicitly formulating it) in 1880 in \cite{amaimennewekker}*{p.\ 204}.   A detailed motivation for these claims may be found in \cite{medvet}*{p. 96-97}.
} about 135 years.  
Thus, $(\exists^{2})$ and $\HBU$ should count as `core' or `ordinary'  mathematics.  

\smallskip

The previous observations will give rise to different reactions in different people: one person will see the above as a convincing argument for the adoption of higher-order arithmetic, while another person will see this as another subject that needs to be formalised in $\L_{2}$.  
To avoid a deadlock, we recall the connection between physics and the gauge integral from \cite{dagsamIII}*{\S3.3} as follows: Muldowney has expressed the following opinion in a private communication.  
\begin{quote}
There are a number of different approaches to the formalisation of Feynman’s path integral.  However, 
if one requires the formalisation to be close to Feynman’s original formulation, then the gauge integral
is really the only approach.
\end{quote}
Arguments for this opinion, including major contributions to Rota's program for the Feyman integral, may be found in \cite{mully}*{\S A.2}.  
We adopt a similar stance regarding the adoption of higher-order arithmetic:  anyone interested in a \emph{direct}\footnote{In both second- and higher-order RM, real numbers are represented by Cauchy sequences, but the associated practice is actually close to mathematical practice, as discussed in Remark \ref{forealsteve}.} logical formalisation of the gauge integral, has no choice but to adopt the higher-order framework.  
In other words, \emph{assuming one wants to formalise the gauge integral in a way close to the original}, one is wedded to $(\exists^{2})$ and $\HBU$.  To be clear, this does not exclude the possibility 
of alternative formalisations in $\L_{2}$, at the cost of a development that is (very) different from the literature.  Nonetheless, the treatment in \cite{walshout} is ultimately based on fundamental results of the gauge integral from \cite{zwette}.

\smallskip

We now turn to item \eqref{barf}, introduced at the beginning of this section and summarised as \emph{faithfulness is hard}.   
First of all, we provide an example where it is \emph{easy} (in terms of logical strength) to show that the second-order formalisation in RM is faithful \emph{in scope and generality} to the original.  
\begin{exa}[Coding continuous functions]\label{exak}\rm
As is well-known, continuous functions are represented by codes in RM (see \cite{simpson2}*{II.6.1}).  
It is then a natural question whether codes actually capture \emph{all} continuous functions (say in a weak system).  Indeed, if codes only captured a special sub-class, then 
a theorem of RM would be about that sub-class, and not about all continuous functions.  However, Kohlenbach has shown in \cite{kohlenbach4}*{\S4} that $\WKL$ suffices to prove that   
every continuous function has a code.  Hence, the RM of $\WKL$ does not really change if we introduce codes, i.e.\ there is a perfect match between the theorems in second- and higher-order arithmetic.    
Thus, \emph{second-order} $\WKL$ (working in $\RCAo$) proves that the the second-order formalisation is faithful \emph{in scope} to the original.
\end{exa}
Secondly, we provide an example where it is \emph{extremely hard} to show that the second-order formalisation is faithful in scope to the original.
\begin{exa}[Coding measurable functions]\label{xle}\rm
Measurable functions are represented in RM by sequences of codes for continuous functions (see \cite{simpson2}*{X.1.11}).
As in Example~\ref{exak}, it is a natural question whether codes actually capture \emph{all} measurable functions (again in a weak system).  Indeed, if codes only captured a special sub-class, then 
a theorem of RM would be about that sub-class, and not about all measurable functions.  Now, \emph{Lusin's theorem} (see e.g.\ \cite{toames}*{1.3.28}) guarantees that every measurable function can be approximated 
by a sequence of continuous functions.  However, as shown in \cite{dagsamVI}, Lusin's theorem (and the same for many similar approximation theorems) implies $\WHBU$ from Section \ref{WHBU}, and the latter is not provable in $\SIXK$ for any $k$, i.e.\ $(\exists^{3})$ is required as for $\HBU$ (see also \cites{dagsamV}).  
\end{exa}
In light of Example \ref{xle}, to guarantee that theorems about codes for measurable functions have the same generality as theorems about measurable functions, i.e.\ to show that the second-order formalisation is faithful to the original, we require $\WHBU$, a \emph{third-order} theorem only provable in full second-order arithmetic $\Z_{2}^{\Omega}$.

\smallskip
\noindent
Finally, while item \eqref{trunk} can be dismissed as an aesthetic preference, dismissing item~\eqref{barf} as unimportant betrays a certain formalist view of the foundations of mathematics. 
We finish this section with a quote on the adequacy of $\L_{2}$.
\begin{quote}
We focus on the language of second order arithmetic, because that language is the
weakest one that is rich enough to express and develop the bulk of core mathematics. (\cite{simpson2}*{Preface})
\end{quote}
In conclusion, we believe Simpson's claim is \textbf{wrong} \emph{in the situations described by items \eqref{trunk} and \eqref{barf}} above.  Indeed, mathematics is (and has been for a long time) replete with discontinuous phenomena and, in our opinion, indirectly dealing with the latter via codes is not satisfactory as this obfuscates a number of interesting \emph{mathematical}\footnote{In the basic development of the gauge integral (\cite{zwette}), to show that the latter is well-defined, one applies the Cousin lemma (and hence $\HBU$) to the canonical cover associated to the gauge function; the latter is continuous if and only if the original function is Riemann integrable.  In other words, viewing the gauge integral as an extension of the Riemann integral, one essentially always works with uncountable covers generated by discontinuous functions.  Thus, the first explosion in Remark~\ref{sexplosion} is quite natural from this (mathematical) point of view.} phenomena, like the plethora 
of splittings and disjunctions from Section \ref{main} and the `explosions' from Remark \ref{sexplosion}.  Moreover, the requirement that e.g.\ codes capture all measurable functions  is based on Lusin's theorem and hence $\WHBU$, only provable in full second-order arithmetic $\Z_{2}^{\Omega}$.  In this light, one might as well work directly in higher-order arithmetic. 

\subsection{The bigger picture}\label{diskol2}
We discuss the place occupied by higher-order RM in the grand scheme of things, esp.\ how higher-order arithmetic relates to fields based on second-order arithmetic, like RM and (classical) computability theory.
We start with some historical considerations, leading up to our conclusion. 

\smallskip

If the history of (the foundations of) mathematics teaches us anything, it is that foundational topics can be quite emotionally charged.  
Let us therefore start with a clear caveat: there is nothing \emph{wrong} with second-order arithmetic, RM and its coding, or classical computability theory.  
These are extremely interesting and equally successful enterprises, and perhaps therein lies the nature of the \emph{issue} we wish to discuss in this section, as follows. 

\smallskip 

The aforementioned \emph{issue} has a proud ancestry, and discussing an example will hopefully clarify things.  
The issue at hand is that successful theories (models/techniques/\dots) that go unchallenged for a long time develop an air of being mostly finished or complete, i.e.\ the grand underlying principles are know, and the rest is simple refinement.  
For instance, in the case of late 19th century physics, the following quote from the Nobel-prize winner Michelson is telling:
\begin{quote}
While it is
never safe to affirm that the future of Physical Science has no
marvels in store even more astonishing than those of the past,
it seems probable that most of the grand underlying principles
have been firmly established and that further advances are to be
sought chiefly in the rigorous application of these principles to
all the phenomena which come under our notice.  (see \cites{mich1, mich2, mich3})
\end{quote}
Weinberg discusses this topic in \cite{wijnberg} and sorts myth from fact, recounting quotes from Planck and Millikan that back Michelson's view.
It is a matter of the historical record that only a couple of decades after Michelson's quote, modern physics was developed, yielding an entire array of new `grand underlying principles'.  

\smallskip

Coming back to mathematics, we believe that the history of second-order arithmetic and associated fields like RM and (classical) recursion theory has been similar: 
this development was extremely successful and impressive, leading to a feeling that the grand underlying principles
had been firmly established.  Indeed, the \emph{G\"odel hierarchy} is a collection of logical systems ordered via consistency strength, or essentially equivalent: ordered via inclusion\footnote{Simpson and Friedman claim that inclusion and consistency strength yield the same G\"odel hierarchy as depicted in \cite{sigohi}*{Table 1} with the caveat that e.g.\ $\RCA_{0}$ and $\WKL_{0}$ have the same first-order strength, but the latter is strictly stronger than the former.\label{fooker}}.  This hierarchy is claimed to capture most systems that are natural or have foundational import, as follows. 
\begin{quote}
{It is striking that a great many foundational theories are linearly ordered by $<$. Of course it is possible to construct pairs of artificial theories which are incomparable under $<$. However, this is not the case for the ``natural'' or non-artificial theories which are usually regarded as significant in the foundations of mathematics.} (\cite{sigohi})
\end{quote}
Burgess and Koellner corroborate this claim in \cite{dontfixwhatistoobroken}*{\S1.5} and \cite{peterpeter}*{\S1.1}.
The G\"odel hierarchy is a central object of study in mathematical logic, as e.g.\ argued by Simpson in \cite{sigohi}*{p.\ 112} or Burgess in \cite{dontfixwhatistoobroken}*{p.\ 40}.  
Precursors to the G\"odel hierarchy may be found in the work of Wang (\cite{wangjoke}) and Bernays (see \cite{theotherguy}, and the translation in \cite{puben}).
Friedman (\cite{friedber}) studies the linear nature of the G\"odel hierarchy in detail.  

\smallskip

In contrast to the aforementioned\footnote{Simpson's above grand claim notwithstanding, there are some examples of theorems (predating $\HBU$ and \cite{dagsamIII}) that also fall outside of the G\"odel hierarchy (based on inclusion), like \emph{special cases} of Ramsey's theorem and the axiom of determinacy from set theory (\cites{dsliceke, shoma}).    
} `received view', and starting with the results in \cite{dagsamIII,dagsamV}, a \emph{large} number of \emph{natural} theorems (of higher-order arithmetic) have been identified forming a branch \emph{independent} of the medium range of the G\"odel hierarchy (based on inclusion$^{\ref{fooker}}$).  
Results pertaining to `uniform' theorems are in \cite{dagsamV}, while the results pertaining to $\HBU$ and the gauge integral are in \cite{dagsamIII}.  
We draw the following conclusions from these observations.
\begin{enumerate}
\item Stepping outside $\L_{2}$, as motivated in Section \ref{skylar}, yields a picture completely different from the G\"odel hierarchy.  This linear order is an artifact of the `absence of discontinuity' discussed in Sections \ref{fintro} and \ref{diskol}. 
\item Notions of continuity and discontinuity successful in first- and second-order arithmetic have to be rethought entirely, or abandoned for new notions, to penetrate structures in higher types, and that this remains for the future as mathematics inevitably evolves.
\item By Example \ref{xle}, one needs to accept hard-to-prove theorems of higher-order arithmetic to guarantee that the associated second-order formalisation is faithful.  While the latter enterprise is therefore no less interesting, Simpson's claim pertaining to the adequacy of $\L_{2}$ become untenable. 
\item We conjecture the existence of other branches, independent of both the G\"odel hiearchy and the branch populated by $\HBU$ and its kin.  
\end{enumerate}
%
%

\smallskip

Finally, the reader should read nothing but simple analogy in the above observations: the discovery of modern physics does not compare in any way to recent discoveries in higher-order arithmetic.   
%
%


%
%


\section{Conclusion}\label{konkelfoes}
The following table summarises some of our results, without mentioning the base theory; the latter is generally conservative over $\WKL_{0}$ (or is weaker).  
In light of this, we may conclude that the higher-order framework yields plenty of equivalences for disjunctions and splittings, in contrast to the second-order framework, and this for the reasons discussed in Section \ref{diskol}.  
\begin{figure}[h]
\resizebox{\linewidth}{!}{%
\begin{tabular}{ |c|c|c| } 
 \hline
 $\MUC\asa [\WKL +(\kappa_{0}^{3})+\neg(\exists^{2})]$ & $(\exists^{3})\asa [(Z^{3})+ (\exists^{2})]$ &$(\kappa_{0}^{3})\asa [(Z^{3})+\FF]$ \\ 
  $\MUC\asa [\WKL +(\kappa_{0}^{3})+\neg(S^{2})]$&$(\exists^{3})\asa [(\kappa_{0}^{3})+(\exists^{2})]$ & $[(\kappa_{0}^{3})+\WKL]\asa  [(\exists^{3})\vee \MUC]$  \\
    $\MUC\asa [\WKL +(\kappa_{0}^{3})+\neg(\exists^{3})]$& $(\exists^{3})\asa [\FF+(Z^{3})+\neg\MUC] $& $\FF\asa [(\exists^{2})\vee \MUC]$  \\
    $\MUC\asa [\FF+ \neg(\exists^{2})]$ &  $(\exists^{2})\asa [\UATR\vee\neg\HBU]$ & $\FF\asa [(\exists^{2})\vee (\kappa_{0}^{3})] $\\
    $\MUC\asa [\FF+(Z^{3})+ \neg(S^{2})]$&$(\exists^{2})\asa [\FF+\neg\MUC]$& $(Z^{3})\asa [(\exists^{3})\vee \neg(\exists^{2})]$ \\
        $\MUC\asa [\FF+(Z^{3}) +\neg(\exists^{3})]$&$ \WKL\asa [(\exists^{2})\vee \HBU] $ &$(Z^{3})\asa [(\exists^{3})\vee \neg\FF\vee \MUC]$\\
  $\T_{1}\asa [ \T_{0}\vee \Sigma_{2}^{0}\textup{\textsf{-IND}}]$      &$\WWKL\asa [(\exists^{2})\vee \WHBU]$& $\LIN\asa [\HBU \vee \neg\WKL]$ \\
        \hline
\end{tabular}
}
\caption{Summary of our results}\label{fagure}
\end{figure}\\
Finally, Simpson describes the `mathematical naturalness' of logical systems as:
\begin{quote}
From the above it is clear that the [Big Five] five basic systems $\RCA_{0}$, $\WKL_{0}$, $\ACA_{0}$, $\ATR_{0}$, $\FIVE$ arise naturally from investigations of the Main Question. The proof that these systems are mathematically natural is provided by Reverse Mathematics. (\cite{simpson2}*{I.12})
\end{quote}
We leave it to the reader to decide if the aforementioned results bestow naturalness onto the theorems involved in the equivalences.  
We do wish to point out that some of the theorems in Figure \ref{fagure} are natural, well-established, and date back more than a century already; see Section \ref{skylar} for details.

\begin{ack}\rm
My research was supported by the John Templeton Foundation (grant ID 60842), the Alexander von Humboldt Foundation, and LMU Munich (via the Excellence Initiative and the Center for Advanced Studies of LMU).  
I express my gratitude towards these institutions.  Opinions expressed in this paper do not necessarily reflect those of the John Templeton Foundation.    

\smallskip

The research leading to this paper grew out of my joint project with Dag Normann, the papers \cite{dagsamV,dagsamIII} in particular. 
I thank Dag Normann for his valuable advice, especially regarding the properties of $(Z^{3})$.  
I also thank Denis Hirschfeldt for his valuable suggestions regarding $\T_{0}$.   I thank the anonymous referee for various helpful suggestions, esp.\ pertaining to Section~\ref{opinion}.   
Finally, I thank Anil Nerode, Denis Hirschfeldt, and Steve Simpson for their help shaping Section \ref{opinion}.
\end{ack}

\begin{bibdiv}
\begin{biblist}
\bib{avi2}{article}{
  author={Avigad, Jeremy},
  author={Feferman, Solomon},
  title={G\"odel's functional \(``Dialectica''\) interpretation},
  conference={ title={Handbook of proof theory}, },
  book={ series={Stud. Logic Found. Math.}, volume={137}, },
  date={1998},
  pages={337--405},
}

\bib{avi1337}{article}{
  author={Avigad, Jeremy},
  author={Dean, Edward T.},
  author={Rute, Jason},
  title={Algorithmic randomness, reverse mathematics, and the dominated convergence theorem},
  journal={Ann. Pure Appl. Logic},
  volume={163},
  date={2012},
  number={12},
  pages={1854--1864},
}

\bib{bartle3}{book}{
  author={Robert {Bartle}},
  title={{The elements of real analysis.}},
  year={1976},
  publisher={John Wiley\&Sons. XV, 480 p.},
}

\bib{bartle2}{book}{
  author={Robert {Bartle} and Donald {Sherbert}},
  title={{Introduction to real analysis}},
  pages={xi + 404},
  year={2000},
  publisher={Wiley},
}

\bib{bartle1337}{article}{
  author={Robert {Bartle}},
  title={{A modern theory of integration.}},
  journal={{Grad. Stud. Math.}},
  volume={32},
  pages={xiv + 458},
  year={2001},
}

\bib{theotherguy}{article}{
  pages={52--69},
  year={1935},
  author={Paul Bernays},
  volume={34},
  title={Sur le Platonisme Dans les Math\'ematiques},
  journal={L'Enseignement Math\'ematique},
}

\bib{botsko}{article}{
  author={Botsko, Michael},
  title={A Unified Treatment of Various Theorems in Elementary Analysis},
  journal={Amer. Math. Monthly},
  volume={94},
  date={1987},
  number={5},
  pages={450--452},
}

\bib{boulanger}{article}{
  author={David R. {Belanger}},
  title={{$\mathsf {WKL}_0$ and induction principles in model theory.}},
  journal={{Ann. Pure Appl. Logic}},
  volume={166},
  number={7-8},
  pages={767--799},
  year={2015},
}

\bib{puben}{book}{
  author={Paul Benacerraf},
  author={Hilary Putnam},
  edition={2},
  title={Philosophy of Mathematics: Selected Readings},
  publisher={Cambridge University Press},
  year={1984},
}

\bib{brouw}{book}{
  author={Brouwer, L. E. J.},
  title={Collected works. Vol. 1},
  note={Philosophy and foundations of mathematics; Edited by A. Heyting},
  publisher={North-Holland Publishing Co.},
  place={Amsterdam},
  date={1975},
  pages={xv+628},
}

\bib{opborrelen}{article}{
  author={Borel, Emile},
  title={Sur quelques points de la th\'eorie des fonctions},
  journal={Ann. Sci. \'Ecole Norm. Sup. (3)},
  volume={12},
  date={1895},
  pages={9--55},
}

\bib{dontfixwhatistoobroken}{book}{
  author={Burgess, John P.},
  title={Fixing Frege},
  series={Princeton Monographs in Philosophy},
  publisher={Princeton University Press},
  date={2005},
  pages={x+257},
}

\bib{burkdegardener}{book}{
  author={Burk, Frank E.},
  title={A garden of integrals},
  series={The Dolciani Mathematical Expositions},
  volume={31},
  publisher={Mathematical Association of America, Washington, DC},
  date={2007},
  pages={xiv+281},
}

\bib{cant}{book}{
  author={Cantor, Georg},
  title={Gesammelte Abhandlungen mathematischen und philosophischen Inhalts},
  language={German},
  note={Reprint of the 1932 original},
  publisher={Springer-Verlag, Berlin-New York},
  date={1980},
  pages={vii+489 pp. (1 plate)},
}

\bib{cousin1}{article}{
  author={Cousin, Pierre},
  title={Sur les fonctions de $n$ variables complexes},
  journal={Acta Math.},
  volume={19},
  date={1895},
  number={1},
  pages={1--61},
}

\bib{kindke}{book}{
  author={Dedekind, Richard},
  title={Stetigkeit und irrationale Zahlen},
  language={German},
  publisher={6te unver\"{a}nderte Aufl. Friedr. Vieweg \& Sohn. Braun-schweig},
  date={1960},
  pages={22},
}

\bib{ohjoy}{article}{
  author={A. {Denjoy}},
  title={{Une extension de l'int\'egrale de {\it M. Lebesgue}.}},
  journal={{C. R. Acad. Sci., Paris}},
  volume={154},
  pages={859--862},
  year={1912},
}

\bib{damirzoo}{misc}{
  author={Dzhafarov, Damir D.},
  title={Reverse Mathematics Zoo},
  note={\url {http://rmzoo.uconn.edu/}},
}

\bib{fried}{article}{
  author={Friedman, Harvey},
  title={Some systems of second order arithmetic and their use},
  conference={ title={Proceedings of the International Congress of Mathematicians (Vancouver, B.\ C., 1974), Vol.\ 1}, },
  book={ },
  date={1975},
  pages={235--242},
}

\bib{fried2}{article}{
  author={Friedman, Harvey},
  title={ Systems of second order arithmetic with restricted induction, I \& II (Abstracts) },
  journal={Journal of Symbolic Logic},
  volume={41},
  date={1976},
  pages={557--559},
}

\bib{yukebox}{article}{
  author={Friedman, Harvey},
  author={Simpson, Stephen G.},
  author={Yu, Xiaokang},
  title={Periodic points and subsystems of second-order arithmetic},
  journal={Ann. Pure Appl. Logic},
  volume={62},
  date={1993},
  number={1},
  pages={51--64},
}

\bib{friedber}{article}{
  author={Friedman, Harvey M.},
  title={Interpretations, According to Tarski},
  journal={Interpretations of Set Theory in Discrete Mathematics and Informal Thinking, The Nineteenth Annual Tarski Lectures, \url {http://u.osu.edu/friedman.8/files/2014/01/Tarski1052407-13do0b2.pdf}},
  date={2007},
  number={1},
  pages={pp.\ 42},
}

\bib{supergandy}{article}{
  author={Gandy, R. O.},
  title={General recursive functionals of finite type and hierarchies of functions},
  journal={Ann. Fac. Sci. Univ. Clermont-Ferrand No.},
  volume={35},
  date={1967},
  pages={5--24},
}

\bib{gormon}{article}{
  author={Gordon, Russell A.},
  title={The use of tagged partitions in elementary real analysis},
  journal={Amer. Math. Monthly},
  volume={105},
  date={1998},
  number={2},
  pages={107--117},
}

\bib{hartjeS}{article}{
  author={Hartley, John P.},
  title={Effective discontinuity and a characterisation of the superjump},
  journal={J. Symbolic Logic},
  volume={50},
  date={1985},
  number={2},
  pages={349--358},
}

\bib{hestrong}{book}{
  author={Hewitt, Edwin},
  author={Stromberg, Karl},
  title={Real and abstract analysis},
  note={A modern treatment of the theory of functions of a real variable; Third printing; Graduate Texts in Mathematics, No. 25},
  publisher={Springer},
  date={1975},
  pages={x+476},
}

\bib{dsliceke}{book}{
  author={Hirschfeldt, Denis R.},
  title={Slicing the truth},
  series={Lecture Notes Series, Institute for Mathematical Sciences, National University of Singapore},
  volume={28},
  publisher={World Scientific Publishing},
  date={2015},
  pages={xvi+214},
}

\bib{polahirst}{article}{
  author={Hirst, Jeffry L.},
  title={Representations of reals in reverse mathematics},
  journal={Bull. Pol. Acad. Sci. Math.},
  volume={55},
  date={2007},
  number={4},
  pages={303--316},
}

\bib{hunterphd}{book}{
  author={Hunter, James},
  title={Higher-order reverse topology},
  note={Thesis (Ph.D.)--The University of Wisconsin - Madison},
  publisher={ProQuest LLC, Ann Arbor, MI},
  date={2008},
  pages={97},
}

\bib{ishicont}{article}{
  author={Ishihara, Hajime},
  title={On Brouwer's continuity principle},
  journal={Indag. Math. (N.S.)},
  volume={29},
  date={2018},
  number={6},
  pages={1511--1524},
}

\bib{josting}{book}{
  author={J\"urgen {Jost}},
  title={{Postmodern analysis, 3rd ed.}},
  pages={pp.\ 371},
  year={2005},
  publisher={Springer},
}

\bib{kleine}{book}{
  author={Kleiner, Israel},
  title={Excursions in the history of mathematics},
  publisher={Birkh\"auser/Springer, New York},
  date={2012},
}

\bib{peterpeter}{incollection}{
  author={Koellner, Peter},
  title={Large Cardinals and Determinacy},
  booktitle={The Stanford Encyclopedia of Philosophy},
  editor={Edward N. Zalta},
  howpublished={\url {https://plato.stanford.edu/archives/spr2014/entries/large-cardinals-determinacy/}},
  year={2014},
  edition={Spring 2014},
  publisher={Metaphysics Research Lab, Stanford University},
}

\bib{kohlenbach2}{article}{
  author={Kohlenbach, Ulrich},
  title={Higher order reverse mathematics},
  conference={ title={Reverse mathematics 2001}, },
  book={ series={Lect. Notes Log.}, volume={21}, publisher={ASL}, },
  date={2005},
  pages={281--295},
}

\bib{kohlenbach4}{article}{
  author={Kohlenbach, Ulrich},
  title={Foundational and mathematical uses of higher types},
  conference={ title={Reflections on the foundations of mathematics (Stanford, CA, 1998)}, },
  book={ series={Lect. Notes Log.}, volume={15}, publisher={ASL}, },
  date={2002},
  pages={92--116},
}

\bib{blindeloef}{article}{
  author={Lindel\"of, Ernst},
  title={Sur Quelques Points De La Th\'eorie Des Ensembles},
  journal={Comptes Rendus},
  date={1903},
  pages={697--700},
}

\bib{longmann}{book}{
  author={Longley, John},
  author={Normann, Dag},
  title={Higher-order Computability},
  year={2015},
  publisher={Springer},
  series={Theory and Applications of Computability},
}

\bib{medvet}{book}{
  author={Medvedev, Fyodor A.},
  title={Scenes from the history of real functions},
  series={Science Networks. Historical Studies},
  volume={7},
  publisher={Birkh\"auser Verlag, Basel},
  date={1991},
  pages={265},
}

\bib{mich1}{article}{
  author={Michelson A.\ A.},
  title={XIX The Department of Physics, Introductory},
  journal={Annual Register, University of Chicago},
  date={1894},
  pages={p.\ 150},
}

\bib{mich2}{article}{
  author={Michelson A.\ A.},
  title={Some of the Objects and Methods of Physical Science},
  journal={University of Chicago Quarterly Calendar},
  date={Aug.\ 1894},
  volume={3},
  number={2},
  pages={p.\ 15},
}

\bib{mich3}{article}{
  author={Michelson A.\ A.},
  title={Some of the Objects and Methods of Physical Science},
  journal={The Electrical Engineer },
  date={1896},
  volume={21},
  number={400},
  pages={p.\ 9},
}

\bib{shoma}{article}{
  author={Montalb\'an, A.},
  author={Shore, Richard A.},
  title={The limits of determinacy in second-order arithmetic},
  journal={Proc. Lond. Math. Soc. (3)},
  volume={104},
  date={2012},
  number={2},
  pages={223--252},
}

\bib{montahue}{article}{
  author={Montalb{\'a}n, A.},
  title={Open questions in reverse mathematics},
  journal={Bull. Symb. Logic},
  volume={17},
  date={2011},
  pages={431--454},
}

\bib{mullingitover}{book}{
  author={Muldowney, P.},
  title={A general theory of integration in function spaces, including Wiener and Feynman integration},
  series={Pitman Research Notes in Mathematics Series},
  volume={153},
  publisher={Longman Scientific \& Technical, Harlow; John Wiley \& Sons, Inc., New York},
  date={1987},
  pages={viii+115},
}

\bib{mulkerror}{article}{
  author={Muldowney, P.},
  title={The infinite dimensional Henstock integral and problems of Black-Scholes expectation},
  journal={J. Appl. Anal.},
  volume={8},
  date={2002},
  number={1},
  pages={1--21},
}

\bib{secondmulling}{book}{
  author={Muldowney, Pat},
  title={A modern theory of random variation},
  note={With applications in stochastic calculus, financial mathematics, and Feynman integration},
  publisher={John Wiley \& Sons},
  date={2012},
  pages={xvi+527},
}

\bib{mully}{book}{
  author={Muldowney, Pat},
  title={A modern theory of random variation},
  publisher={Wiley \& Sons},
  date={2012},
  pages={xvi+527},
}

\bib{dagsam}{article}{
  author={Normann, Dag},
  author={Sanders, Sam},
  title={Nonstandard Analysis, Computability Theory, and their connections},
  journal={Submitted, Available from arXiv: \url {https://arxiv.org/abs/1702.06556}},
  date={2017},
}

\bib{dagsamII}{article}{
  author={Normann, Dag},
  author={Sanders, Sam},
  title={The strength of compactness in Computability Theory and Nonstandard Analysis },
  journal={Submitted, Available from arXiv: \url {https://arxiv.org/abs/1801.08172}},
  date={2018},
}

\bib{dagsamIII}{article}{
  author={Normann, Dag},
  author={Sanders, Sam},
  title={On the mathematical and foundational significance of the uncountable},
  journal={Journal of Mathematical Logic, \url {https://doi.org/10.1142/S0219061319500016}},
  date={2018},
}

\bib{dagsamV}{article}{
  author={Normann, Dag},
  author={Sanders, Sam},
  title={Uniformity in Mathematics},
  journal={Submitted, arxiv: https://arxiv.org/abs/1808.09783},
  date={2018},
}

\bib{samcie18}{article}{
  author={Sanders, Sam},
  title={Some nonstandard equivalences in Reverse Mathematics},
  journal={Proceedings of CiE2018, Lecture notes in Computer Science, Springer},
  date={2018},
  pages={pp.\ 10},
}

\bib{dagsamVI}{article}{
  author={Normann, Dag},
  author={Sanders, Sam},
  title={Representations in measure theory},
  journal={In preparation},
  date={2019},
}

\bib{tepelpinch}{article}{
  author={Pincherle, Salvatore},
  title={Sopra alcuni sviluppi in serie per funzioni analitiche (1882)},
  journal={Opere Scelte, I, Roma},
  date={1954},
  pages={64--91},
}

\bib{rudin}{book}{
  author={Rudin, Walter},
  title={Principles of mathematical analysis},
  edition={3},
  note={International Series in Pure and Applied Mathematics},
  publisher={McGraw-Hill},
  date={1976},
  pages={x+342},
}

\bib{yamayamaharehare}{article}{
  author={Sakamoto, Nobuyuki},
  author={Yamazaki, Takeshi},
  title={Uniform versions of some axioms of second order arithmetic},
  journal={MLQ Math. Log. Q.},
  volume={50},
  date={2004},
  number={6},
  pages={587--593},
}

\bib{simpson1}{collection}{
  title={Reverse mathematics 2001},
  series={Lecture Notes in Logic},
  volume={21},
  editor={Simpson, Stephen G.},
  publisher={ASL},
  place={La Jolla, CA},
  date={2005},
  pages={x+401},
}

\bib{simpson2}{book}{
  author={Simpson, Stephen G.},
  title={Subsystems of second order arithmetic},
  series={Perspectives in Logic},
  edition={2},
  publisher={CUP},
  date={2009},
  pages={xvi+444},
}

\bib{sigohi}{incollection}{
  author={Stephen G. {Simpson}},
  title={{The G\"odel hierarchy and reverse mathematics.}},
  booktitle={{Kurt G\"odel. Essays for his centennial}},
  pages={109--127},
  year={2010},
  publisher={Cambridge University Press},
}

\bib{stillebron}{book}{
  author={Stillwell, John},
  title={Reverse mathematics, proofs from the inside out},
  pages={xiii + 182},
  year={2018},
  publisher={Princeton Univ.\ Press},
}

\bib{zwette}{book}{
  author={Swartz, Charles},
  title={Introduction to gauge integrals},
  publisher={World Scientific Publishing Co., Singapore},
  date={2001},
  pages={x+157},
}

\bib{tait1}{article}{
  author={Tait, William W.},
  title={Finitism},
  year={1981},
  journal={The Journal of Philosophy},
  volume={78},
  pages={524-564},
}

\bib{toames}{book}{
  author={Tao, Terence},
  title={An introduction to measure theory},
  series={Graduate Studies in Mathematics},
  volume={126},
  publisher={American Mathematical Society, Providence, RI},
  date={2011},
  pages={xvi+206},
}

\bib{taoana1}{book}{
  author={Tao, Terence},
  title={Analysis. I},
  series={Texts and Readings in Mathematics},
  volume={37},
  edition={3},
  publisher={Hindustan Book Agency},
  date={2014},
  pages={xviii+347},
}

\bib{thom2}{book}{
  author={Thomson, B.},
  author={Bruckner, J.},
  author={Bruckner, A.},
  title={Elementary real analysis},
  publisher={Prentice Hall},
  date={2001},
  pages={pp.\ 740},
}

\bib{troelstra1}{book}{
  author={Troelstra, Anne Sjerp},
  title={Metamathematical investigation of intuitionistic arithmetic and analysis},
  note={Lecture Notes in Mathematics, Vol.\ 344},
  publisher={Springer Berlin},
  date={1973},
  pages={xv+485},
}

\bib{walshout}{article}{
  author={Walsh, Sean},
  title={Definability aspects of the Denjoy integral},
  journal={Fund. Math.},
  volume={237},
  date={2017},
  number={1},
  pages={1--29},
}

\bib{wangjoke}{article}{
  author={Wang, Hao},
  title={Eighty years of foundational studies},
  journal={Dialectica},
  volume={12},
  date={1958},
  pages={466--497},
}

\bib{amaimennewekker}{book}{
  author={Weierstra\ss , K.},
  title={Ausgew\"ahlte Kapitel aus der Funktionenlehre},
  series={Teubner-Archiv zur Mathematik},
  volume={9},
  publisher={BSB B. G. Teubner Verlagsgesellschaft, Leipzig},
  date={1988},
  pages={272},
}

\bib{wijnberg}{book}{
  author={Weinberg, Steven},
  title={Dreams of a final theory},
  publisher={Vintage Books},
  date={1994},
  pages={pp.\ 340},
}

\end{biblist}
\end{bibdiv}
\bye